\newtheorem{teo}{Theorem}[section]
\newtheorem{defi}[teo]{Definition}
\newtheorem{propo}[teo]{Proposition}
\newtheorem{lem}[teo]{Lemma}
\newtheorem{cor}[teo]{Corollary}
\newtheorem{remark}[teo]{Remark}
\newtheorem*{ramif}{Ramification condition}
\numberwithin{equation}{section}
\renewcommand{\char}[1]{char\,#1}
\def\Cal#1{{\mathcal #1}}
\def\HH{\Cal{H}}
\def\P{\mathbb P}
\def\f{\mathbb F}
\def\a{\alpha}
\def\b{\beta}
\def\g{\gamma}
\def\t{\theta}
\def\d{\delta}
\def\ii{\infty}
\newcommand{\tr}{\operatorname{Tr}}
\newcommand{\m}{\mathcal}
\title[A problem of Beelen, Garcia and Stichtenoth]{A problem of Beelen, Garcia and Stichtenoth on an Artin-Schreier tower in characteristic two}
\author[M. Chara]{Mar\'ia Chara}
\address{Instituto de Matem\'atica Aplicada del Litoral\\
   Colectora Ruta Nac. N 168 \\
   Paraje El Pozo\\
   (3000) Santa Fe - Argentina}
\email{mchara@santafe-conicet.gov.ar}
\author[H. Navarro]{Horacio Navarro}
\address{Instituto de Matem\'atica Aplicada del Litoral\\
   Colectora Ruta Nac. N 168 \\
   Paraje El Pozo\\
   (3000) Santa Fe - Argentina}
\email{horacio.navarro@correounivalle.edu.co}
\author[R. Toledano]{Ricardo Toledano}
\address{ Departamento de Matem\'atica\\
   Facultad de Ingenier\'ia Qu\'imica (UNL)\\
   Santiago del Estero 2829\\
   (3000) Santa Fe - Argentina}
\email{rtoledano@santafe-conicet.gov.ar} 
\begin{document}
\maketitle

\begin{abstract}
We study a tower of function fields of Artin-Schreier type  over a
finite field with $2^s$ elements. The study of the asymptotic
behavior of this tower was left as an open problem by Beelen,
Garc\'ia and Stichtenoth in $2006$. We prove that this tower  is
asymptotically good for $s$ even and asymptotically bad for $s$ odd.
\end{abstract}

\section{Introduction}

In 2006  P. Beelen, A. Garcia and H. Stichtenoth gave the first steps in \cite{BGS06} towards the classification, according to their asymptotic behavior, of recursive towers of function fields over a finite field $\f_q$ with $q$ elements.  They focused in recursive towers defined by equations of the form $f(y)=g(x)$, where $f$ and $g$ are suitable rational functions over $\f_q$. Towers defined in this way were called $(f,g)$-towers over $\f_q$. In particular, they noticed that many $(f,g)$-towers can be recursively defined by  equations of the form $h(y)=A\cdot h(B\cdot x)$ for some polynomial $h$ over $\f_q$ and $A$, $B\in GL(2,\f_q)$. Here the symbol $A\cdot u$ stands for the usual action of elements of $GL(2,\f_q)$ as fractional transformations, i.e. \[\left(\begin{array}{cc}
a & b\\ c & d
\end{array}\right)\cdot u\colon\!\!=\frac{au+b}{cu+d}.\] This was a key observation that allowed them to obtain classification results in the important cases of recursive towers of Kummer  and Artin-Schreier type. As an application of these results, they gave a complete list of all $(f,g)$-towers of Artin-Schreier type with $\deg f= \deg g=2$ over the finite field $\f_2$. They checked that all the possible cases were already considered in previous works, except for the following Artin-Schreier tower $\mathcal{H}$ recursively defined by the equation
\begin{equation}\label{ec1}
y^2+y=\frac{x}{x^2+x+1},
\end{equation}
over $\f_2$. Nothing else was said about this tower and in fact they posed, as an open problem,  to determine when the above equation \eqref{ec1} defines an asymptotically good tower over $\f_{2^s}$ with $s\geq 1$. The aim of this work is to give a complete answer to this question.

More precisely we will show that the tower $\mathcal{H}$ defined by \eqref{ec1} has finite genus over $\f_{2^s}$ for every $s\geq 1$ but its splitting rate is zero when $s$ is odd  and positive when $s$ is even. Consequently $\mathcal{H}$ is asymptotically bad in the first case and  asymptotically good in the second one.  In particular we finish, in this way, with the study of all $(f,g)$-towers of Artin-Schreier type with $\deg f= \deg g=2$ over $\f_2$ initiated by  P. Beelen, A. Garcia and H. Stichtenoth in \cite{BGS06}.

The organization of this paper is as follows. In Section \ref{preliminaries} we give some basic definitions and we recall some known results. In Section \ref{genus} we deal with the genus of $\mathcal{H}$ over $\f_{2^s}$ and we prove in Theorem \ref{finitegenus} that the genus of $\mathcal{H}$ is finite for any positive integer $s$. Finally, Section \ref{splitting} is devoted to the study of the splitting rate of $\mathcal{H}$. We prove in Theorems \ref{teo4.2} and \ref{goodf4} that the splitting rate of $\mathcal{H}$ is zero when $s$ is odd and positive when $s$ is even respectively.  Section \ref{splitting} is certainly the most intricate and interesting part of the paper. This is because of the ramification behavior of some rational places which is quite different, when going up in the tower, according to the parity of $s$. What happens is that for $s$ odd, the number of rational places becomes constant after the first step in the tower but for $s$ even the situation changes completely. Roughly speaking  when $s$ is even some rational places, which are totally ramified in the first steps of the tower, start to split completely in the tower. We give a detailed study of this behavior throughout several technical lemmas which heavily rely on the explicit construction of what we call Artin-Schreier elements of type $1$ and $2$ (see Definition \ref{astype}) in each step of the tower $\HH$.  
\section{Preliminaries}\label{preliminaries}
We give now the basic definitions and concepts of function fields and towers of function fields which will be used in this paper. The standard reference for all of this is \cite{stichbook}. Let $k$ be a perfect field. A function field (of one variable) $F$ over $k$ is a finite algebraic extension $F$ of the rational function field $k(x)$, where $x$ is a transcendental element over $k$.

Let $F$ be a function field over $k$. The symbol $\P(F) $ stands for the set of all places of $F$ and $g(F)$ for the genus of $F$.

Let $F'$ be a finite extension of $F$ and let $Q\in\P(F')$.
We will write $Q|P$ when the place $Q$ of $F'$ lies over the place $P$ of $F$, i.e. $P=Q\cap F$. In this case the symbols $e(Q | P)$ and $d(Q | P)$ denote, as usual, the ramification index and the different exponent of $Q | P$, respectively.

 A {\em tower} $\mathcal{F}$ (of function fields) over $k$ is a sequence $\mathcal{F}=\{F_i\}_{i=0}^{\infty}$ of function fields over $k$ such that

\begin{enumerate}[(a)]
\item $F_i \subsetneq F_{i+1}$ for all $i\geq 0.$
\item The extension $F_{i+1}/F_{i}$ is finite and separable, for all $i\geq 1.$
\item The field $k$ is algebraically closed in  $F_i$, for all $i\geq 0.$
\item The genus $g(F_i)\rightarrow\infty$  as  $i\rightarrow\infty$.
\end{enumerate}

A tower $\mathcal{F}=\{F_i\}_{i=0}^{\infty}$ over $k$ is called {\it recursive} if there exist a sequence of transcendental elements $\{x_i\}_{i=0}^{\infty}$ over $k$ and a bivariate polynomial $H(X,Y)\in k[X,Y]$ such that $F_0=k(x_0)$ and
\[F_{i+1}=F(x_i),\]
where $H(x_i,x_{i+1})=0$ for all $i\geq 0$. Associated to any recursive tower $\mathcal{F}$ we have its {\it basic function field} $F=k(x,y)$ where $H(x,y)=0$ and $x$ is a transcendental element over $k$.

The following definitions are important when dealing with the so called asymptotic behavior of a tower (see  \cite[Chapter 7 ]{stichbook} for details) over $k$.  Let $\mathcal{F}=\{F_i\}_{i=0}^{\infty}$ be a  tower of function fields over  $k$. The \textit{the genus} $\gamma(\m{F})$ of $\mathcal{F}$ over $F_0$ is defined as
\[\gamma(\mathcal{F}):=\lim_{i\rightarrow \infty}\frac{g(F_i)}{[F_i:F_0]}\,.\]
When $k=\f_q$ we denote by $N(F_i)$ the number of rational places (i.e., places of degree one) of $F_i$ and the \textit{ splitting rate} $\nu(\m{F})$ of $\mathcal{F}$ over $F_0$ is defined as
\[\nu(\mathcal{F}):=\lim_{i\rightarrow \infty}\frac{N(F_i)}{[F_i:F_0]}.\]

We say that a tower $\mathcal{F}$ is \textit{asymptotically good} over $\f_q$ if $\nu(\mathcal{F})>0$ and $\gamma(\mathcal{F}) < \infty$. Otherwise we say that  $\mathcal{F}$  is  \textit{asymptotically bad}. Equivalently, a tower $\mathcal{F}$ is asymptotically good over  $\f_q$ if and only if \textit{the limit} of the tower $\mathcal{F}$ \[\lambda(\mathcal{F}):=\lim_{i\rightarrow \infty}\frac{N(F_i)}{g(F_i)}=\frac{\nu(\mathcal{F})}{\gamma(\mathcal{F})},\]
is positive.

In the study of the asymptotic behavior of a tower $\mathcal{F}=\{F_i\}_{i=0}^{\infty}$ over $\f_q$, the following sets play an important role: the \textit{ramification locus} $R(\mathcal{F})$ of $\mathcal{F}$, which is the set of   places $P$ of $F_0$ such that $P$  is ramified in $F_i$ for some $i\geq 1$ and the \textit{splitting locus} $Sp(\mathcal{F})$ of $\mathcal{F}$, which is the set of rational places $P$ of $F_0$ such that $P$ splits completely in $F_i$ for all $i\geq 1$.

Let $B\geq 0$ be a real number and let $F'/F$ be a finite extension of function fields over $k$. A place $P$ of $F$ is called \textit{$B$-bounded} in $F'$ if
\[d(Q|P)\leq B\cdot (e(Q|P)-1),\]
for any place $Q$ of $F'$ lying over $P$. The extension $F'/F$ is called {\em $B$-bounded} if every place of $F$ is $B$-bounded in $F'$.  A tower $\{F_i\}_{i=0}^{\infty}$ over $k$ is called {\em $B$-bounded} if every extension $F_i/F_0$ is $B-$bounded.  In  \cite[Proposition 1.5]{galoisclosure} the following result is proved.

\begin{propo}\label{p1} A $B$-bounded tower $\mathcal{F}=\{F_i\}_{i=0}^{\infty}$ over $k$  with finite ramification locus has finite genus. More precisely, the following bound for the genus of $\mathcal{F}$ holds:
\[\gamma(F)\leq g(F_0)-1+\frac{B}{2}\sum_{P\in \mathcal{R}(\mathcal{F})}\deg P.\]
\end{propo}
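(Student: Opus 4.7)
The plan is to prove the bound via the Hurwitz genus formula applied to each extension $F_i/F_0$, combined with the $B$-boundedness hypothesis and the fundamental equation controlling ramification.

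First, since $F_i/F_0$ is finite separable, the Hurwitz genus formula yields
\[2g(F_i)-2 = [F_i:F_0]\,(2g(F_0)-2) + \deg\operatorname{Diff}(F_i/F_0).\]
Dividing by $2[F_i:F_0]$ we get
\[\frac{g(F_i)-1}{[F_i:F_0]} = g(F_0)-1 + \frac{1}{2[F_i:F_0]}\deg\operatorname{Diff}(F_i/F_0).\]
Thus the whole problem reduces to bounding $\deg\operatorname{Diff}(F_i/F_0)/[F_i:F_0]$ uniformly in $i$ by $B\sum_{P\in R(\mathcal{F})}\deg P$.

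Next I would break the different into local contributions. Since $d(Q|P)=0$ whenever $Q|P$ is unramified, only places lying above $R(\mathcal{F})$ contribute, so
\[\deg\operatorname{Diff}(F_i/F_0) = \sum_{P\in R(\mathcal{F})}\sum_{Q|P} d(Q|P)\deg Q.\]
Using $B$-boundedness, $d(Q|P)\leq B\,(e(Q|P)-1)$, and then the fundamental equation $\sum_{Q|P} e(Q|P)f(Q|P)=[F_i:F_0]$ together with $\deg Q=f(Q|P)\deg P$, one computes
\[\sum_{Q|P}(e(Q|P)-1)\deg Q = [F_i:F_0]\deg P - \sum_{Q|P}\deg Q \leq [F_i:F_0]\deg P.\]
Combining these estimates gives
\[\frac{1}{[F_i:F_0]}\deg\operatorname{Diff}(F_i/F_0) \leq B\sum_{P\in R(\mathcal{F})}\deg P.\]

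Finally I plug this back into the Hurwitz identity, obtaining the uniform bound
\[\frac{g(F_i)-1}{[F_i:F_0]} \leq g(F_0)-1 + \frac{B}{2}\sum_{P\in R(\mathcal{F})}\deg P,\]
and let $i\to\infty$, noting that $1/[F_i:F_0]\to 0$ so the left side tends to $\gamma(\mathcal{F})$. In particular $\gamma(\mathcal{F})$ is finite. The only real step with any subtlety is the passage from the pointwise inequality $d(Q|P)\leq B(e(Q|P)-1)$ on each $Q$ to the global bound after summing; this is where the fundamental equation must be invoked carefully to absorb the unknown splitting data in $F_i/F_0$ into the single factor $[F_i:F_0]\deg P$. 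Everything else is a direct application of Hurwitz and a limit, so I do not expect any further obstacle.
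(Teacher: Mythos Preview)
Your argument is correct and is precisely the standard route: Hurwitz for $F_i/F_0$, the local bound $d(Q|P)\leq B(e(Q|P)-1)$, and the fundamental equation to collapse $\sum_{Q|P}(e(Q|P)-1)\deg Q\leq [F_i:F_0]\deg P$, followed by passage to the limit. The paper does not give its own proof of this proposition; it simply cites \cite[Proposition 1.5]{galoisclosure}, and the argument there is exactly the one you have written.
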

We immediately see that a tower $\mathcal{F}=\{F_i\}_{i=0}^{\infty}$ over $\f_q$ is asymptotically  good if the tower is $B$-bounded, $\mathcal{R}(\mathcal{F})$ is a finite set and  $Sp(\mathcal{F})\neq \emptyset$.
The next proposition is proved in \cite[Proposition 3.9.6]{stichbook}.
\begin{propo}\label{extensionporconstantes}Let $\mathcal{F}=\{F_i\}_{i=0}^{\infty}$ be  a tower over $\f_q$ and let $\mathcal{E}=\{E_i\}_{i=0}^{\infty}$ be the tower over $\f_{q^n}$ where each $E_i$ is the composite of the fields $F_i$ and $\f_{q^n}$. Then $Sp(\mathcal{F})\subseteq Sp(\mathcal{E})\cap F_0$.
\end{propo}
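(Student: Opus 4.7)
The plan is to show that for any $P\in Sp(\m{F})$ there is a unique place $P'$ of $E_0$ lying above $P$ with $P'\in Sp(\m{E})$; then $P'\cap F_0=P$, so $P\in Sp(\m{E})\cap F_0$. The argument is a systematic application of the standard properties of constant field extensions (see \cite[Sect.~3.6]{stichbook}) at each level of the tower.

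First I would check that, for each $i\geq 0$, the extension $E_i/F_i$ is a constant field extension of degree exactly $n$. This uses tower property (c): since $\f_q$ is the exact constant field of $F_i$, it is linearly disjoint from $\f_{q^n}$ over $\f_q$ inside $E_i$, so $[E_i:F_i]=n$. Tower multiplicativity $[E_i:F_0]=[E_i:E_0][E_0:F_0]=[E_i:F_i][F_i:F_0]$ then gives $[E_i:E_0]=[F_i:F_0]$. Recall further that every place in a constant field extension is unramified, and that a place of degree $d$ of $F_i$ has $\gcd(n,d)$ extensions to $E_i$, each of relative residue degree $n/\gcd(n,d)$.

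Applying this to $P$ (degree $1$), I obtain that $P$ has a unique extension $P'$ to $E_0$, rational over $\f_{q^n}$. Likewise, for each fixed $i\geq 1$ and each of the $[F_i:F_0]$ rational places $Q$ of $F_i$ above $P$, there is a unique extension $Q'$ of $Q$ to $E_i$, rational over $\f_{q^n}$. Since $P'$ is the only place of $E_0$ above $P$, necessarily $Q'|P'$; and the multiplicativity of $e$ and $f$ in $E_i\supseteq F_i\supseteq F_0$, combined with $e(Q|P)=f(Q|P)=1$ and the fact that $P'$ and $Q'$ both have residue field $\f_{q^n}$, forces $e(Q'|P')=f(Q'|P')=1$.

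A short counting argument finishes the proof: any place of $E_i$ above $P'$ restricts to a place of $F_i$ above $P$, i.e., to one of the $Q$'s, and each such $Q$ has a unique lift to $E_i$. Thus the $Q'$ already exhibited are all the places above $P'$, giving exactly $[F_i:F_0]=[E_i:E_0]$ of them with $ef=1$, hence complete splitting of $P'$ in $E_i$. Since this holds for every $i\geq 1$, $P'\in Sp(\m{E})$. The argument has no real obstacle; the only subtlety is the jump in residue field from $\f_q$ to $\f_{q^n}$ when passing from $P$ to $P'$, which is what makes the intersection $Sp(\m{E})\cap F_0$ meaningful in the statement.
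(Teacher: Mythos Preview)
Your argument is correct and is the standard proof via the structure theory of constant field extensions. Note, however, that the paper does not supply its own proof of this proposition: it simply cites \cite[Proposition~3.9.6]{stichbook}. What you have written is essentially a self-contained version of that book argument, so there is no methodological divergence to discuss.
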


We recall now from \cite[Section 7.4]{stichbook} the concept of weakly ramified extensions.

\begin{defi}\label{d1} Let $F$ be a function field over  $k$ with $Char(k)=p$. A finite field extension $E/F$ is said to be weakly ramified, if the following conditions hold:
\begin{enumerate}[(i)]
\item There exist intermediate fields $F=E_0\subseteq E_1 \subseteq \dots \subseteq E_n=E$ such that all extensions $E_{i+1}/E_i$ are Galois $p$-extensions (i.e., $[E_{i+1}:E_i]$ is a power of $p$), for $i=0,1\dots, n-1$.
\item For any $P\in \P(F)$ and any $Q\in \P(E)$ lying over $P$, the different exponent is given by
$d(Q|P)=2(e(Q|P)-1)$.
\end{enumerate}

\end{defi}

We have (see  \cite[Remark 7.4.11, Proposition 7.4.13]{stichbook}) the following results:
\begin{propo}\label{r1}  Let $E/F$ be an extension of function fields over $k$ such that $[E:F]= p^m$ where $p=\char F$. Assume that there exist a chain of intermediate fields
\[F=E_0\subseteq E_1 \subseteq \dots \subseteq E_n =E\] with the property
$E_{i+1}/E_i$ is a Galois $p$-extension for all $i=0,1\dots, n-1$.
Let $P\in \P(F)$ and $Q\in \P(E)$ lying over $P$ and let $Q_i$ be the restriction of $Q$ to $E_i$ for $i=0,\dots n-1$. Then the following conditions are equivalent:
\begin{enumerate}[(i)]
\item   $d(Q|P)=2(e(Q|P)-1)$.
\item  $d(Q_{i+1}|Q_i)=2(e(Q_{i+1}|Q_i)-1)$ for  $i=0,\ldots,
n-1$.
\end{enumerate}

\end{propo}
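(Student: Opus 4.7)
My plan rests on the transitivity formulas for ramification indices and different exponents, combined with a uniform lower bound for the different in Galois $p$-extensions in characteristic $p$. Setting $Q_0=P$ and $Q_n=Q$, one has
\[
e(Q|P) = \prod_{i=0}^{n-1} e(Q_{i+1}|Q_i),\qquad d(Q|P) = \sum_{i=0}^{n-1} e(Q|Q_{i+1})\, d(Q_{i+1}|Q_i),
\]
the second identity following by iterating the three-level formula $d(R|P) = d(R|S) + e(R|S)\, d(S|P)$.

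The direction (ii)$\Rightarrow$(i) I would obtain by substitution and telescoping: plugging $d(Q_{i+1}|Q_i) = 2(e(Q_{i+1}|Q_i) - 1)$ into the transitivity formula and using $e(Q|Q_{i+1})\,e(Q_{i+1}|Q_i) = e(Q|Q_i)$ gives
\[
d(Q|P) = 2\sum_{i=0}^{n-1} \bigl(e(Q|Q_i) - e(Q|Q_{i+1})\bigr) = 2\bigl(e(Q|P) - 1\bigr),
\]
since the boundary terms are $e(Q|Q_0)=e(Q|P)$ and $e(Q|Q_n)=1$.

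For the converse, I would first establish the step-wise inequality $d(Q_{i+1}|Q_i) \geq 2(e(Q_{i+1}|Q_i) - 1)$. Since $E_{i+1}/E_i$ is a Galois $p$-extension, its Galois group is a finite $p$-group and therefore admits a composition series with factors of order $p$; this refines $E_{i+1}/E_i$ into a chain of cyclic degree-$p$ extensions in characteristic $p$, each of Artin-Schreier type. For each elementary step the explicit formula $d = (p-1)(m+1)$ for the different of an Artin-Schreier extension with ramification integer $m \geq 1$ yields $d \geq 2(p-1) = 2(e-1)$, while unramified steps satisfy the inequality trivially; the same telescoping argument as above upgrades this to $d(Q_{i+1}|Q_i) \geq 2(e(Q_{i+1}|Q_i) - 1)$. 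Summing over $i$ via the transitivity formula produces the global bound $d(Q|P) \geq 2(e(Q|P) - 1)$, and hypothesis (i) saturates the inequality, forcing equality in every summand, which is exactly (ii). The only real obstacle is the step-wise lower bound, which rests on the Artin-Schreier structure of cyclic degree-$p$ Galois extensions in characteristic $p$; everything else reduces to transitivity and the same telescoping identity applied twice.
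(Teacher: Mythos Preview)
Your argument is correct. Note that the paper does not actually prove this proposition: it is quoted, without proof, from Stichtenoth's book (Remark~7.4.11 and Proposition~7.4.13), so there is no in-paper argument to compare against. Your route---transitivity of $e$ and $d$ together with the telescoping identity, plus the step-wise lower bound $d\ge 2(e-1)$ obtained by refining each Galois $p$-extension into Artin--Schreier substeps---is sound and is essentially the standard argument. A slight variant that avoids the Artin--Schreier refinement is to invoke Hilbert's different formula $d=\sum_{i\ge 0}(|G_i|-1)$ directly for each Galois $p$-step: since $G_0/G_1$ has order prime to $p$ while $G_0$ is a $p$-group, one gets $G_0=G_1$ and hence $d\ge 2(|G_0|-1)=2(e-1)$ in one stroke. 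Either way the rest of your telescoping and equality-forcing argument goes through unchanged.
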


Notice that if every extension $F_i/F_0$ of a tower $\{F_i\}_{i=0}^{\infty}$ over $k$ is  weakly ramified then the tower is $2$-bounded.

\begin{propo}\label{r2}   Let $E/F$ be a finite extension of function fields over  $k$ and let $M$ and $N$ be intermediate fields of $E\supseteq F$ such that $E=MN$ is the compositum of $M$ and $N$. If both extensions $M/F$ and $N/F$ are weakly ramified then $E/F$ is weakly ramified.
\end{propo}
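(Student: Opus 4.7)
The plan is to reduce the weak ramification of $E=MN$ over $F$ to a step-by-step condition via Proposition \ref{r1}, applied along a carefully constructed chain of Galois $p$-extensions. First I would use the hypothesis to pick chains
\[F = M_0 \subseteq M_1 \subseteq \cdots \subseteq M_r = M \quad \text{and} \quad F = N_0 \subseteq N_1 \subseteq \cdots \subseteq N_s = N\]
of Galois $p$-extensions given by Definition \ref{d1}(i), and concatenate them into
\[F = M_0 \subseteq \cdots \subseteq M_r = M = M N_0 \subseteq M N_1 \subseteq \cdots \subseteq M N_s = M N = E.\]
Each step $MN_{i-1} \subseteq MN_i$ is Galois, and restriction identifies $\operatorname{Gal}(MN_i/MN_{i-1})$ with a subgroup of the $p$-group $\operatorname{Gal}(N_i/N_{i-1})$. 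Hence every step of the chain is a (possibly trivial) Galois $p$-extension, establishing condition (i) of Definition \ref{d1} for $E/F$.

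Next, fix $P\in\P(F)$ and $Q\in\P(E)$ above $P$, and let $Q_j$ denote the restriction of $Q$ to the $j$-th field in the chain. By Proposition \ref{r1}, condition (ii) for $E/F$ at $(Q,P)$ is equivalent to $d(Q_{j+1}|Q_j)=2(e(Q_{j+1}|Q_j)-1)$ at every step of the chain. For the first $r$ steps (inside $M/F$), this follows immediately from Proposition \ref{r1} applied to the weakly ramified $M/F$. For the remaining steps $MN_{i-1}\subseteq MN_i$, I would note that each is obtained by base change to $MN_{i-1}$ of the step $N_{i-1}\subseteq N_i$, and that $N_i/N_{i-1}$ is itself weakly ramified by Proposition \ref{r1} applied to $N/F$.

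The principal obstacle is then the base-change claim: if $L/K$ is a weakly ramified Galois $p$-extension and $K'\supseteq K$ is an arbitrary finite extension, then $LK'/K'$ is weakly ramified. Locally, for a single Artin-Schreier step $L=K(y)$ with $y^p-y=u$, weak ramification at a place $P$ of $K$ amounts to the minimal pole order of $u$ modulo the operator $\wp(w)=w^p-w$ being at most $1$; for the base change one must verify the same bound for $u$ viewed over the completion of $K'$ at each place above $P$. The required iterated Artin-Schreier reduction succeeds because the residue fields are perfect (inherited from $k$). Equivalently, weak ramification of a Galois $p$-extension corresponds to vanishing of the second lower-numbering ramification group, and this vanishing passes to the base change since $\operatorname{Gal}(LK'/K')$ embeds into $\operatorname{Gal}(L/K)$. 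Combining this base-change preservation with the step-by-step verification along our chain, and invoking Proposition \ref{r1} in the reverse direction, yields $d(Q|P)=2(e(Q|P)-1)$ for $E/F$, establishing condition (ii) and completing the proof.
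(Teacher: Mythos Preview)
The paper does not prove this proposition: it is quoted from Stichtenoth's book (Proposition 7.4.13 there), so there is no in-paper argument to compare against. Your chain-through-$M$ construction and the reduction of condition (ii) to a step-by-step check via Proposition~\ref{r1} are exactly the right scaffolding, and this is how Stichtenoth organises the argument as well.

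The gap is in your base-change step, and both justifications you offer fall short. The ramification-group argument is not valid: lower-numbering filtrations restrict well to subgroups only in the situation $K\subseteq K'\subseteq L$ with the top field fixed, not under genuine base change with $K'\not\subseteq L$; the embedding $\mathrm{Gal}(LK'/K')\hookrightarrow\mathrm{Gal}(L/K)$ carries no automatic information about $G_2$. Indeed the unrestricted claim is false: with $p=2$, $K=\f_4((t))$, $L=K(y)$ where $y^2+y=1/t$ (weakly ramified), and $K'=K(t^{1/3})$, the extension $LK'/K'$ has Artin--Schreier invariant $3$, so $d=4\neq 2(e-1)$. Your Artin--Schreier reduction fares no better: perfectness of the residue field lets you drive the invariant down to something coprime to $p$, but nothing in your sketch forces it all the way to~$1$, and for base extensions $K'/K$ that are not themselves weakly ramified it can genuinely exceed~$1$. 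What is missing is precisely the hypothesis you have available but did not invoke: along your chain the base extension $MN_{i-1}/N_{i-1}$ is itself weakly ramified (by induction together with Proposition~\ref{r1}), and with that extra input one closes the step by a direct different comparison in the diamond $N_{i-1}\subset N_i$, $MN_{i-1}\subset MN_i$. That computation---not the two shortcuts you propose---is the substantive content of Stichtenoth's 7.4.13.
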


Now we can prove the following result which will be useful in the study of the genus of the tower $\mathcal{H}$.
\begin{cor}\label{wram} Let $\mathcal{F}=\{F_i\}_{i=0}^{\infty}$ be a recursive tower of function fields over $k$. Let $F=k(x,y)$ be the basic function field associated to $F$ and suppose that the extensions $F/k(x)$ and $F/k(y)$ are weakly ramified. Then each extension $F_i/F_0$ is weakly ramified (in which case we say that $\mathcal{F}$ is weakly ramified tower). Furthermore  $\mathcal{F}$ has finite genus if the ramification locus of $\mathcal{F}$ is finite.
\end{cor}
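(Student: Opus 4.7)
The plan is to prove the first assertion by induction on $i$, establishing the stronger statement that $F_i/k(x_j)$ is weakly ramified for every $0\leq j\leq i$; the desired conclusion that $F_i/F_0$ is weakly ramified is then the case $j=0$. Throughout I use that weak ramification is transitive in towers: if $L/M$ and $M/K$ are weakly ramified then so is $L/K$, since the chains of Galois $p$-extensions concatenate and the formula $d(Q|P)=2(e(Q|P)-1)$ telescopes correctly along the chain via $d(Q|P)=d(Q|R)+e(Q|R)d(R|P)$.

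The base case $i=0$ is trivial. For the inductive step, suppose $F_i/k(x_j)$ is weakly ramified for every $0\leq j\leq i$. Writing $F_{i+1}=F_i\cdot k(x_i,x_{i+1})$ as a compositum over $k(x_i)$, the inductive hypothesis (with $j=i$) gives that $F_i/k(x_i)$ is weakly ramified, while $k(x_i,x_{i+1})/k(x_i)$ is weakly ramified via the isomorphism with $F/k(x)$; Proposition \ref{r2} then yields that $F_{i+1}/k(x_i)$ is weakly ramified. Applying Proposition \ref{r1} to a Galois $p$-chain from $k(x_i)$ to $F_{i+1}$ passing through $k(x_i,x_{i+1})$ (which exists by refining the compositum chains) shows that $F_{i+1}/k(x_i,x_{i+1})$ is weakly ramified, and combining this with $k(x_i,x_{i+1})/k(x_{i+1})\cong F/k(y)$ weakly ramified via transitivity gives $F_{i+1}/k(x_{i+1})$ weakly ramified. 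For $0\leq j\leq i-1$, Proposition \ref{r1} applied to the chain $k(x_i)\subseteq F_i\subseteq F_{i+1}$ gives $F_{i+1}/F_i$ weakly ramified, and combining with the inductive hypothesis $F_i/k(x_j)$ weakly ramified via transitivity closes the induction.

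For the second assertion, a weakly ramified tower is automatically $2$-bounded, since $d(Q|P)=2(e(Q|P)-1)$ realizes the bound with $B=2$. Hence, when $R(\mathcal{F})$ is finite, Proposition \ref{p1} with $B=2$ gives $\gamma(\mathcal{F})\leq g(F_0)-1+\sum_{P\in R(\mathcal{F})}\deg P<\infty$. The main subtlety is the inductive bookkeeping: a direct induction on the statement $F_i/F_0$ weakly ramified does not go through, because the inductive step needs the auxiliary fact $F_i/k(x_i)$ weakly ramified in order to set up the compositum argument over $k(x_i)$; this is precisely where the hypothesis $F/k(y)$ weakly ramified enters, used at each preceding step to propagate the $k(x_i)$ version of the statement up the tower.
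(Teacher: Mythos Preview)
Your argument is correct, but the inductive setup differs from the paper's. The paper inducts directly on the statement ``$F_i/F_0$ is weakly ramified'' and sidesteps your strengthened hypothesis by using the shift isomorphism $x_j\mapsto x_{j+1}$: from $F_i/F_0$ weakly ramified it deduces that $k(x_1,\ldots,x_{i+1})/k(x_1)$ is weakly ramified, then forms the compositum with $F_1/k(x_1)\cong F/k(y)$ over $k(x_1)$ via Proposition~\ref{r2} to get $F_{i+1}/k(x_1)$ weakly ramified, and finally extracts $F_{i+1}/F_i$ from this via Proposition~\ref{r1}. Your route trades the shift trick for a heavier inductive load, tracking $F_i/k(x_j)$ for all $j$ so that the compositum can be formed over $k(x_i)$ instead of $k(x_1)$. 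This makes the roles of the two hypotheses more explicit --- $F/k(x)$ feeds the compositum step, $F/k(y)$ feeds the passage from $k(x_i,x_{i+1})$ down to $k(x_{i+1})$ --- while the paper's version is slightly leaner. Both reach the same conclusion for the second assertion, via the observation that weak ramification gives $B=2$ in Proposition~\ref{p1}.

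One small imprecision: when you invoke Proposition~\ref{r1} on ``the chain $k(x_i)\subseteq F_i\subseteq F_{i+1}$'', note that $F_i/k(x_i)$ is not itself a single Galois $p$-step, so the proposition does not literally apply to this two-term chain. You must first refine $k(x_i)\subseteq F_i$ using the Galois $p$-chain guaranteed by the inductive hypothesis that $F_i/k(x_i)$ is weakly ramified, then append the Artin--Schreier step $F_{i+1}/F_i$; the same caveat applies to the chain through $k(x_i,x_{i+1})$. You evidently have this refinement in mind (you say as much earlier), but it is worth stating explicitly at each invocation.
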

\begin{proof}
We have that $F_0=k(x_0)$ and $F_{i+1}=F(x_i)$ for $i\geq 0$ where
$\{x_i\}_{i=0}^{\infty}$ is a sequence of transcendental elements
over $k$. By hypothesis, the extensions $F_1/k(x_0)$ and
$F_1/k(x_1)$ are weakly ramified. In particular  $F_1/F_0$ is weakly
ramified. Now suppose that $F_i/F_0$ is weakly ramified. Since
$F_i=k(x_0,\ldots,x_i)$, the extensions $F_i/F_0$ and
$k(x_1,\ldots,x_i,x_{i+1})/k(x_1)$ are isomorphic over $k$ (by the map
sending $x_j\rightarrow x_{j+1}$ for $j=0,\ldots,i$), hence
$k(x_1,\ldots,x_i,x_{i+1})/k(x_1)$ is weakly ramified. Since
$F_{i+1}$ is the compositum of the fields $F_1=k(x_0,x_1)$ and
$k(x_1,\ldots,x_i,x_{i+1})$, we have that the extension
$F_{i+1}/k(x_1)$ is weakly ramified by Proposition \ref{r2} and so
$F_{i+1}/F_i$ is a weakly ramified extension by Proposition
\ref{r1}. This immediately implies that $F_{i+1}/F_0$ is a weakly
ramified extension and this proves the first part. In particular we
have that $\mathcal{F}$ is a $2$-bounded tower, so the second statement
follows directly from Proposition \ref{p1}.
\end{proof}

\section{The genus of the tower $\mathcal{H}$}\label{genus}

Let $\mathcal{H}=\{F_i\}_{i=0}^{\infty}$ be the tower of function fields over $k=\f_{2^s}$ recursively defined  by \eqref{ec1}, which is the equation of Artin-Schreier type
\begin{equation}\nonumber
y^2+y=\frac{x}{x^2+x+1}\,.
\end{equation}
In this section we will show that $\mathcal{H}$ is a tower with finite genus for any $s\in \mathbb{N}$.

It is well known that both the different exponent and the ramification index, remain the same under constant field extensions so we can work also  over an algebraic closure $K$ of $k$ when needed.

The next lemma describes the ramification in the basic function field $F=K(x,y)$ corresponding to $\mathcal{H}$ (see Figure~\ref{figu1}). We will use the following notation: the symbol $P_a$ (resp. $R_a$) denotes the only zero of $x+a$ (resp. $y+a$)  in $K(x)$ (resp. $K(y)$) for $a\in K$, while $P_{\infty}$ (resp. $R_{\infty}$) denotes the only pole of $x$ in $K(x)$ (resp. $y$ in $K(y)$). Notice that we can write $x-a$ instead of $x+a$ because we are in even characteristic.

\begin{lem}\label{lema3.1} Let $F=K(x,y)$ be the basic function field corresponding to the tower  $\mathcal{H}$ over $K$. Let $\alpha_1, \alpha_2 \in K$ be the roots of the polynomial $h(T)=T^2+T+1$. We have that the following hold.

\begin{enumerate}[(i)]
\item\label{i} The place $P_0$ of $K(x)$ splits completely in
$F$. Moreover, if $Q\in \P(F)$ lies over  $P_0$,
then $Q\cap K(y)$ is either $R_0$ or $R_1$.

\item\label{ii} The place $P_{\infty}$ of $K(x)$ splits completely in
$F$. Moreover, if $Q\in \P(F)$ lies over $P_{\infty}$
then $Q\cap K(y)$ is either $R_0$ or $R_1$.

\item\label{iii} The place $P_1$ of $K(x)$ splits completely in
$F$. Moreover, if $Q\in \P(F)$ lies over $P_1$
then $Q\cap K(y)$ is either $R_{\alpha_1}$ or $R_{\alpha_2}$.

\item\label{iv} The places $P_{\a_i}$ de $K(x)$ are totally ramified in
$F$ and its  different exponent is $2$. Moreover, if $Q\in \P(F)$ lies over $P_{\a_i}$ for  $i=1$ or $2$, then
$Q\cap K(y)=R_{\infty}$.

\item\label{v} The places $R_{\a_i}$ of $K(y)$ for $i=1,2$ are totally ramified in
$F$ and its  different exponent is $2$.

\item\label{vi} If $P$ is a place of $K(x)$ (resp. $K(y)$) different from
$P_{\alpha_i}$  (resp. $R_{\alpha_i}$) then $P$ is not ramified in
$F$. Therefore the extensions $F/K(x)$ and $F/K(y)$ are weakly ramified so that the extensions $k(x,y)/k(x)$ and $k(x,y)/k(y)$ are also weakly ramified.

\end{enumerate}
\end{lem}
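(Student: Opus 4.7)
The plan is to treat both $F/K(x)$ and $F/K(y)$ as degree-$2$ Artin--Schreier extensions and apply the standard ramification theory (cf.\ \cite[Proposition 3.7.8]{stichbook}). For $F/K(x)$ the defining relation is already in the Artin--Schreier form $y^2+y=u$ with $u=x/(x^2+x+1)$. For $F/K(y)$, I first clear denominators in \eqref{ec1} to obtain $(y^2+y)x^2+(y^2+y+1)x+(y^2+y)=0$; the change of variable $T=(y^2+y)(x+1)/(y^2+y+1)$ then converts this relation to the symmetric Artin--Schreier form
\[T^2+T=\beta,\qquad \beta:=\frac{y^2+y}{y^2+y+1}.\]
With this symmetry available, each part of the lemma reduces to inspecting the zero and pole divisors of $u$ or of $\beta$.

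For parts (i)--(iv) I work with $u$. Its only poles are the simple poles at $P_{\alpha_1},P_{\alpha_2}$, so these are the only places of $K(x)$ that can ramify in $F$; since the pole order $m=1$ is coprime to $p=2$, each is totally ramified with different exponent $d=(p-1)(m+1)=2$. This yields (iv); the restriction $Q\cap K(y)=R_\infty$ then follows from $v_Q(y^2+y)=v_Q(u)=-2$, which forces $v_Q(y)=-1$. At the remaining places $P_0,P_\infty,P_1$, $u$ is regular and (since $K$ is algebraically closed) the residual equation $T^2+T=u(P)$ splits into two distinct linear factors, so $P$ splits completely in $F$. For (i)--(iii), $Q\cap K(y)$ is determined by $y \bmod Q$ as a root of $T^2+T=u(P)$: at $P_0$ and $P_\infty$ one has $u(P)=0$ with roots $\{0,1\}$, giving $R_0$ or $R_1$; at $P_1$ one has $u(P_1)=1/(1+1+1)=1$ with roots $\{\alpha_1,\alpha_2\}$, giving $R_{\alpha_1}$ or $R_{\alpha_2}$.

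The same argument applied to $T^2+T=\beta$ settles (v): $\beta$ has simple poles precisely at $R_{\alpha_1},R_{\alpha_2}$, so these are exactly the ramified places of $K(y)$ in $F$, each with different exponent $2$. Part (vi) then follows because the Artin--Schreier criterion, applied in both directions, shows that any place outside the pole locus of $u$ (respectively $\beta$) is unramified. Finally, every ramified place $Q|P$ in either extension satisfies $e(Q|P)=2$ and $d(Q|P)=2=2(e(Q|P)-1)$, so both $F/K(x)$ and $F/K(y)$ are weakly ramified by Definition \ref{d1}; since ramification indices and different exponents are invariant under constant field extensions, the same conclusion descends to $k(x,y)/k(x)$ and $k(x,y)/k(y)$. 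The one piece of real work is exhibiting the change of variable that brings $F/K(y)$ into Artin--Schreier form; once that is available, every assertion reduces to a routine application of Artin--Schreier ramification theory.
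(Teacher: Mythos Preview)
Your argument is correct, and for the extension $F/K(y)$ it is genuinely different from (and cleaner than) the paper's route. The paper does not put $F/K(y)$ into Artin--Schreier form; instead it works with the minimal polynomial
\[
\phi(T)=T^2+\frac{y^2+y+1}{y^2+y}\,T+\frac{y^2+y+1}{y^2+y}
\]
of $x+1$ over $K(y)$, proves irreducibility by Eisenstein at $R_{\alpha_i}$, computes $d(Q_i\mid R_{\alpha_i})=\nu_{Q_i}(\phi'(x+1))=2$ directly, and then rules out ramification at all other places of $K(y)$ by the inequality $d(Q\mid P)\le \nu_Q(\phi'(x+1))$ together with a separate valuation check at $R_0,R_1$. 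Your substitution $T=(y^2+y)(x+1)/(y^2+y+1)$ yields the symmetric Artin--Schreier equation $T^2+T=(y^2+y)/(y^2+y+1)$ and lets the entire case $F/K(y)$ be handled by exactly the same one-line application of \cite[Proposition~3.7.8]{stichbook} used for $F/K(x)$. The gain is uniformity and brevity: items (v) and (vi) for $K(y)$ follow immediately from the pole divisor of $\beta$, with no ad hoc derivative computation or case analysis at $R_0,R_1$. The paper's approach, on the other hand, avoids having to discover and verify the change of variable.
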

\begin{proof}
Let $\varphi(T)=T^2+T+x/h(x)\in K(x)[T]$. The places $P_0$ and
$P_{\infty}$  of $K(x)$ are zeros of $x/h(x)$ in $K(x)$ so that the
reduction $\varphi(T)\mod P_{\beta}$ with $\beta=0, \infty$ is the
polynomial  $T(T+1)$. Also the reduction $\varphi(T)\mod P_1$ is the
polynomial $T^2+T+1$. Then Kummer's Theorem ( \cite[Theorem
3.3.7]{stichbook}) implies that \ref{i}, \ref{ii} and \ref{iii}
hold.

Let us see \ref{iv}. Let $P=P_{\a_i}$ for $i=1,2$.  Since
$K(x,y)/K(x)$ is an Artin-Schreier extension of degree $2$ and for
$u=0$ we have that
\[m_P=\nu_P\left(u^2+u+\frac{x}{x^2+x+1}\right)=-1\not\equiv0 \mod
2,\] the theory of Artin-Schreier extensions ( \cite[Proposition
3.7.8]{stichbook}) tell us that $P_{\a_i}$ is totally ramified in
$F$ and
 if $Q_i$ is the place of $F$ lying over $P_{\a_i}$ then
$d(Q_i|P_{\a_i})=(2-1)(-m_P+1)=2$. Since $Q_i$ is a pole (of order $2$) of $y^2+y$ in $F$, then $Q_i$ is also a simple pole of $y$ in $F$ so that $Q_i\cap K(y)=R_{\infty}$ which completes the proof of \ref{iv}

Let us see now \ref{v}. Let $Q_i$ be a place of $F$ lying over
$R_{\alpha_i}$ for $i=1,2$. Then $\nu_{Q_i}(y^2+y+1)=e(Q_i|R_{\a_i})$. Let $S_i=Q_i\cap K(x)$. Since
\[\dfrac{(x+1)^2}{h(x)}=y^2+y+1,\] we have
\[e(Q_i|P_{\a_i})=e(Q_i|S_i)\nu_{P_i}((x+1)^2/h(x))=2e(Q_i|S_i)\nu_{S_i}((x+1)/h(x)),\]
so that $e(Q_i|P_{\a_i})=2$, $e(Q_i|S_i)=1$ and $\nu_{S_i}((x+1)/h(x))=1$. Since $h(x)$ is a polynomial, we must have that $S_i=P_1$. Thus we have proved that each $R_{\a_i}$ is totally ramified in $F$ and each $Q_i$ lies over $P_1$ in $K(x)$. In particular $\nu_{Q_i}(x+1)=1$, i.e. $x+1$ is a prime element for $Q_i$. We also have that
\[(x+1)^2+(x+1)\left(\frac{y^2+y+1}{y^2+y}\right)+\frac{y^2+y+1}{y^2+y}=0,\]
so that
\begin{equation}\label{e2}
\phi(T)=T^2+\left(\frac{y^2+y+1}{y^2+y}\right)T+\frac{y^2+y+1}{y^2+y}
\end{equation}
is the minimal polynomial of $x+1$ over $K(y)$ because it is irreducible over $K(y)$ by Eisenstein criterion (\cite[Proposition 3.1.15]{stichbook}) using any of the places $R_{\a_i}$. Then   \cite[Proposition 3.5.12]{stichbook} tell us that
\[d(Q_i|R_{\alpha_i})=\nu_{Q_i}(\phi^{\prime}(x+1))=\nu_{Q_i}\left(\frac{y^2+y+1}{y^2+y}\right)=2,\]  which finishes the proof of \ref{v}.

Finally, let us see \ref{vi}. From \ref{ii} and the theory of
Artin-Schreier extensions we see that the places $P_{\a_i}$ for
$i=1,2$ are the only ones ramified in $F$. Now consider the
polynomial $\phi(T)$ given in $(\ref{e2})$, which is the minimal
polynomial of $x+1$ over $K(y)$. We have that $K(x+1,y)=F$ and also
for any place $P$ of $K(y)$ different from $R_{\b}$ for $\b=0,1$,
the polynomial $\phi(T)$ is integral over $\mathcal{O}_P$, the
valuation ring corresponding to $P$. Let $Q$ be a place of $F$ lying
over $P \not = R_{\beta}$ with $\beta\in \{0,1,\a_1,\a_2\}$, then by
  \cite[Theorem 3.5.10]{stichbook}, we have
\[d(Q|P)\leq \nu_{Q}(\varphi^{\prime}(x+1))=\nu_{Q}\left(\frac{y^2+y+1}{y^2+y}\right)=e(Q|R_{\beta})\nu_{R_{\beta}}\left(\frac{y^2+y+1}{y^2+y}\right)=0,\]
so that $P$ is unramified in $F$. Finally if either $P=R_0$ or $R_1$ and $Q$ is a place of $F$ lying over $P$, then $\nu_Q(y^2+y)=e(Q|P)$. Let $S=Q\cap K(x)$, then
\[1\leq e(Q|P)=e(Q|S)\nu_S(x/h(x)),\]
which implies that $S$ is either $P_0$ or $P_{\infty}$. By \ref{i} and \ref{ii} we have that $e(Q|P)=1$ and we are done.
\end{proof}

\begin{figure}[h!t]\begin{center}

\begin{tikzpicture}[scale=0.7]
\draw[line width=0.5 pt](-16,0)--(-15,2)--(-14,0); %% primer grà fico
  \draw[white, fill=white](-16,0) circle (0.3 cm);
      \draw[white, fill=white](-15,2) circle (0.3 cm);
    \draw[white, fill=white](-14,0) circle (0.3 cm);

\node at (-16,0)  { \footnotesize{$K(x)$}  };%
\node at (-14,0)  {\footnotesize{$K(y)$}};
\node at (-15,2)  {\footnotesize{$F$}};

\draw[line width=0.5 pt](-12,0)--(-11,2)--(-10,0); %% primer grà fico
  \draw[white, fill=white](-12,0) circle (0.3 cm);
    \draw[white, fill=white](-10,0) circle (0.3 cm);

\node at (-12,0)  { \footnotesize{$P_0$}  };%
\node at (-10,0)  {\footnotesize{$R_0$}};

\coordinate (P) at (-11.7,1.3);
\node[rotate=60] (N) at (P) { \tiny{$e=1$}}; %

\coordinate (P) at (-10.3,1.3);
\node[rotate=-63] (N) at (P) { \tiny{$e=1$}}; %

\draw[line width=0.5 pt](-8,0)--(-7,2)--(-6,0); %% segundo gráfico
  \draw[white, fill=white](-8,0) circle (0.3 cm);
    \draw[white, fill=white](-6,0) circle (0.3 cm);

\node at (-8,0)  {\footnotesize{$P_0$}};%
\node at (-6,0)  {\footnotesize{$R_1$}};

\coordinate (P) at (-7.7,1.3);
\node[rotate=60] (N) at (P) {\tiny{$e=1$}}; %

\coordinate (P) at (-6.3,1.3);
\node[rotate=-63] (N) at (P) { \tiny{$e=1$}}; %

\draw[line width=0.5 pt](-4,0)--(-3,2)--(-2,0); %% tercer gráfico
  \draw[white, fill=white](-4,0) circle (0.3 cm);
    \draw[white, fill=white](-2,0) circle (0.3 cm);

\node at (-4,0)  {\footnotesize{$P_{\infty}$}};%
\node at (-2,0)  {\footnotesize{$R_0$}};

\coordinate (P) at (-3.7,1.3);
\node[rotate=60] (N) at (P) { \tiny{$e=1$}}; %

\coordinate (P) at (-2.3,1.3);
\node[rotate=-63] (N) at (P) { \tiny{$e=1$}}; %

\draw[line width=0.5 pt](0,0)--(1,2)--(2,0); %% cuarto gráfico
  \draw[white, fill=white](0,0) circle (0.3 cm);
    \draw[white, fill=white](2,0) circle (0.3 cm);

\node at (0,0)  {\footnotesize{$P_{\infty}$}};%
\node at (2,0)  {\footnotesize{$R_1$}};

\coordinate (P) at (0.3,1.3);
\node[rotate=60] (N) at (P) {\tiny{$e=1$}}; %

\coordinate (P) at (1.7,1.3);
\node[rotate=-63] (N) at (P) { \tiny{$e=1$}}; %

\draw[line width=0.5 pt](-12,-4)--(-11,-2)--(-10,-4); %% primer grà fico de abajo
  \draw[white, fill=white](-12,-4) circle (0.3 cm);
    \draw[white, fill=white](-10,-4) circle (0.3 cm);

\node at (-12,-4)  {\footnotesize{$P_1$}};%
\node at (-10,-4)  {\footnotesize{$R_{\alpha_1}$}};

\coordinate (P) at (-11.7,-2.7);
\node[rotate=60] (N) at (P) {\tiny{$e=1$}}; %

\coordinate (P) at (-10.3,-2.7);
\node[rotate=-63] (N) at (P) {\tiny{$d=e=2$}}; %

\draw[line width=0.5 pt](-8,-4)--(-7,-2)--(-6,-4); %% segundo gráfico de abajo
  \draw[white, fill=white](-8,-4) circle (0.3 cm);
    \draw[white, fill=white](-6,-4) circle (0.3 cm);

\node at (-8,-4)  {\footnotesize{$P_1$}};%
\node at (-6,-4)  {\footnotesize{$R_{\alpha_2}$}};

\coordinate (P) at (-7.7,-2.7);
\node[rotate=60] (N) at (P) { \tiny{$e=1$}}; %

\coordinate (P) at (-6.3,-2.7);
\node[rotate=-63] (N) at (P) { \tiny{$d=e=2$}}; %

\draw[line width=0.5 pt](-4,-4)--(-3,-2)--(-2,-4); %% tercer gráfico de abajo
  \draw[white, fill=white](-4,-4) circle (0.3 cm);
    \draw[white, fill=white](-2,-4) circle (0.3 cm);

\node at (-4,-4)  {\footnotesize{$P_{\alpha_1}$}};%
\node at (-2,-4)  {\footnotesize{$R_{\infty}$}};

\coordinate (P) at (-3.7,-2.7);
\node[rotate=63] (N) at (P) {\tiny{$d=e=2$}}; %

\coordinate (P) at (-2.3,-2.7);
\node[rotate=-63] (N) at (P) {\tiny{$e=1$}}; %

\draw[line width=0.5 pt](0,-4)--(1,-2)--(2,-4); %% cuarto gráfico de abajo
  \draw[white, fill=white](0,-4) circle (0.3 cm);
    \draw[white, fill=white](2,-4) circle (0.3 cm);

\node at (0,-4)  {\footnotesize{$P_{\alpha_2}$}};%
\node at (2,-4)  {\footnotesize{$R_{\infty}$}};

\coordinate (P) at (0.3,-2.7);
\node[rotate=63] (N) at (P) {\tiny{$d=e=2$}}; %

\coordinate (P) at (1.7,-2.7);
\node[rotate=-63] (N) at (P) {\tiny{$e=1$}}; %
    \end{tikzpicture}

   \caption{Ramification in  $F/K(x)$ and  $F/K(y)$ }\label{figu1}
 \end{center}\end{figure}
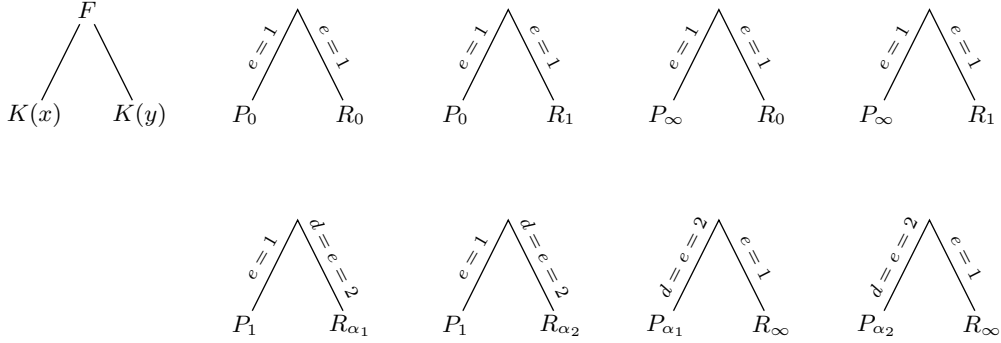

As a direct consequence of the above lemma we have:
\begin{propo}\label{tp} The ramification locus $R(\mathcal{H})$ of the tower   $\mathcal{H}$ is finite. More precisely  \[R(\mathcal{H})\subseteq\{P_0,\,P_1,\,P_{\a_1}, \,P_{\a_2},\,P_{\infty}\}, \]
with the notation used in Lemma \ref{lema3.1}.
\end{propo}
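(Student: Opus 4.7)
The plan is to establish the contrapositive: every place $P$ of $F_0=K(x_0)$ lying outside $S:=\{P_0,P_1,P_{\a_1},P_{\a_2},P_\infty\}$ is unramified in $F_n$ for every $n\geq 1$. I would argue by induction on $n$, the case $n=1$ being precisely Lemma \ref{lema3.1}\ref{vi}. For the inductive step, let $Q_{n-1}\in\P(F_{n-1})$ lie over $P$; since $F_n/F_{n-1}$ is the Artin-Schreier extension given by $x_n^2+x_n=x_{n-1}/(x_{n-1}^2+x_{n-1}+1)$, \cite[Proposition 3.7.8]{stichbook} ensures that $F_n/F_{n-1}$ is unramified at $Q_{n-1}$ as soon as the right-hand side has non-negative valuation there. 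The only poles of this function in $K(x_{n-1})$ are the two places $P_{\a_1}^{(n-1)}$ and $P_{\a_2}^{(n-1)}$, so it suffices to prove that $Q_{n-1}\cap K(x_{n-1})$ is neither of them.

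The crucial auxiliary claim is the following stability property: if $Q\in\P(F_n)$ satisfies $Q\cap K(x_0)\notin S$, then $Q\cap K(x_i)$ also avoids the analogous set $S_i:=\{P_a^{(i)}:a\in\{0,1,\a_1,\a_2,\infty\}\}$ of $K(x_i)$, for every $i=0,\ldots,n$. This is proved by a second induction on $i$, the step from $i$ to $i+1$ going through the basic function field $K(x_i,x_{i+1})\subseteq F_n$. The main input is a symmetric reading of Lemma \ref{lema3.1}: for any place $R$ of $F=K(x,y)$, the restriction $R\cap K(x)$ belongs to $S$ if and only if $R\cap K(y)$ belongs to the analogous set $\{R_0,R_1,R_{\a_1},R_{\a_2},R_\infty\}$ of $K(y)$. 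The forward direction is exactly parts \ref{i}--\ref{v} of Lemma \ref{lema3.1}; the reverse direction is a one-line valuation computation using $y^2+y=x/(x^2+x+1)$, since a zero or pole of $y$ at $R$ forces $x/(x^2+x+1)$ to be zero or infinity at $R$, which pins $x$ down to one of the five values $0,1,\a_1,\a_2,\infty$. After the shift $x\mapsto x_i$, $y\mapsto x_{i+1}$, taking contrapositives gives the desired step.

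Combining both inductions, the only configuration in which $F_n/F_{n-1}$ could ramify at $Q_{n-1}$ is ruled out, so $P$ is unramified in $F_n$, completing the induction and yielding $R(\HH)\subseteq S$. The main obstacle I anticipate is organizational rather than conceptual: the bookkeeping of the places $P_a^{(i)}$ across the different rational subfields $K(x_i)$, and the explicit identification of $K(x_i,x_{i+1})$ with the prototype $K(x,y)$ via the shift isomorphism of the tower, is what requires care. Once this is set up correctly, the argument is just an iterated application of Lemma \ref{lema3.1} together with the standard Artin-Schreier ramification criterion.
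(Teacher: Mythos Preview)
Your argument is correct and is precisely the standard unwinding of what the paper means by ``a direct consequence of the above lemma'' (the paper gives no further proof). The two nested inductions and the use of the basic function field $K(x_i,x_{i+1})$ via the shift isomorphism are exactly the expected mechanism.

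One small point to tighten: in your ``reverse direction'' you only explicitly treat the cases where $y$ has a zero or a pole at $R$, i.e.\ $R\cap K(y)\in\{R_0,R_1,R_\infty\}$. The remaining cases $R\cap K(y)=R_{\a_i}$ are not literally covered by that sentence. They are handled either by the proof of Lemma~\ref{lema3.1}\ref{v}, where it is shown that the place of $F$ above $R_{\a_i}$ restricts to $P_1$ in $K(x)$, or by the equally short residue computation $y^2+y+1=(x+1)^2/(x^2+x+1)$, which forces $x\equiv 1$ whenever $y\equiv\a_i$. With this case added, the biconditional $R\cap K(x)\in S_x\Longleftrightarrow R\cap K(y)\in S_y$ is complete and your induction goes through verbatim.
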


Now we can  state and prove the main result of this section.
\begin{teo}\label{finitegenus} The tower $\mathcal{H}$ is a weakly ramified tower over $\f_{2^s}$ and  its genus $\gamma(\mathcal{H})$ satisfies the estimate
\[\gamma(\mathcal{H})\leq 4,\]
for any positive integer $s$.

\end{teo}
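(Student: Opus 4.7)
The plan is to combine three pieces already in place. First, Lemma~\ref{lema3.1}(\ref{vi}) asserts that the basic extensions $\f_{2^s}(x,y)/\f_{2^s}(x)$ and $\f_{2^s}(x,y)/\f_{2^s}(y)$ are weakly ramified; feeding this into Corollary~\ref{wram} yields at once that every step $F_i/F_0$ of $\mathcal{H}$ is weakly ramified. This gives the first claim of the theorem and, in particular, shows that $\mathcal{H}$ is a $2$-bounded tower.

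Second, by Proposition~\ref{tp} the ramification locus $R(\mathcal{H})$ is finite, being contained in $\{P_0, P_1, P_{\alpha_1}, P_{\alpha_2}, P_{\infty}\}$. Since $F_0 = \f_{2^s}(x_0)$ is rational we have $g(F_0) = 0$, so Proposition~\ref{p1} applied with $B = 2$ yields
\[
\gamma(\mathcal{H}) \leq -1 + \sum_{P \in R(\mathcal{H})} \deg P.
\]

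Third, I would estimate the right-hand side. The places $P_0$, $P_1$, $P_\infty$ are rational and hence each contributes $1$. The remaining two places $P_{\alpha_1}$, $P_{\alpha_2}$ are defined by the roots of $T^2 + T + 1 \in \f_{2^s}[T]$, which lie in $\f_4$. If $s$ is even then $\f_4 \subseteq \f_{2^s}$, and $P_{\alpha_1}$, $P_{\alpha_2}$ are two distinct rational places contributing $2$ in total. If $s$ is odd the two roots are Galois-conjugate over $\f_{2^s}$, so together they define a single place of degree $2$, again of total contribution $2$. In either case $\sum_{P \in R(\mathcal{H})} \deg P = 5$, and therefore $\gamma(\mathcal{H}) \leq 4$.

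Essentially no obstacle remains: the ramification-theoretic heavy lifting was already done in Lemma~\ref{lema3.1}, and the only nuance is the parity-dependent bookkeeping for the degrees of $P_{\alpha_1}$ and $P_{\alpha_2}$, which happens to give the same total in both cases.
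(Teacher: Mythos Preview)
Your proof is correct and follows essentially the same route as the paper: invoke Lemma~\ref{lema3.1}(\ref{vi}) and Corollary~\ref{wram} for weak ramification (hence $2$-boundedness), then Proposition~\ref{tp} and Proposition~\ref{p1} for the genus bound. The paper leaves the numerical evaluation of $\sum_{P\in R(\mathcal{H})}\deg P$ implicit, whereas you spell out the parity-dependent bookkeeping for $P_{\alpha_1},P_{\alpha_2}$; one minor quibble is that Proposition~\ref{tp} only gives a containment, so strictly speaking you have $\sum_{P\in R(\mathcal{H})}\deg P\le 5$ rather than equality, but this of course still yields $\gamma(\mathcal{H})\le 4$.
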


\begin{proof}
Let $s\in\mathbb{N}$ and let $F=k(x,y)$ be the basic function field
associated to $\mathcal{H}$ where $k=\f_{2^s}$. From (vi) of Lemma
\ref{lema3.1} we have that $F/k(x)$ and $F/k(y)$ are weakly ramified
extensions and from Proposition \ref{tp} we also have that the
ramification locus of $\mathcal{H}$ is finite. The conclusions follow
immediately from Corollary \ref{wram} and Proposition \ref{p1}.
\end{proof}

\section{The splitting rate of the tower $\mathcal{H}$}\label{splitting}

Throughout this section $k=\f_{2^s}$ will be a finite field with $2^s$ elements
and $\tr$ denotes the trace map from  $\f_{2^s}$ to $\f_{2}$.

\subsection {\textbf{The odd case.}}
In this subsection we prove one of our main results, namely that the tower $\mathcal{H}$
 over $\f_{2^s}$ has zero splitting rate for every odd integer $s$. We begin with the following technical lemma.

\begin{lem}\label{lema4.1}Let $\t,\b \in k$  such that
$\t^2+\t=\frac{\b}{\b^2+\b+1}.$ Then
\[\tr\left(\frac{\theta}{\theta^2+\theta+1}\right)\neq \tr\left(\frac{\theta+1}{\theta^2+\theta+1}\right).\]
\end{lem}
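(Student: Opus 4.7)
The plan is to prove the stronger statement that the two traces \emph{sum} to $1$, which immediately forces them to be different. First I would exploit additivity of $\tr$ together with the identity $\theta+(\theta+1)=1$ (valid because we are in characteristic~$2$) to reduce the problem to the single computation
\[
\tr\left(\frac{1}{\theta^{2}+\theta+1}\right)=1.
\]

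Next I would use the defining relation $\theta^{2}+\theta = \beta/(\beta^{2}+\beta+1)$ to rewrite the reciprocal. Adding $1$ to both sides and expanding in characteristic~$2$ gives
\[
\theta^{2}+\theta+1 \;=\; \frac{\beta^{2}+1}{\beta^{2}+\beta+1} \;=\; \frac{(\beta+1)^{2}}{\beta^{2}+\beta+1},
\]
so that $1/(\theta^{2}+\theta+1) = (\beta^{2}+\beta+1)/(\beta+1)^{2}$. Before moving on one should verify that neither denominator vanishes: $\beta^{2}+\beta+1\neq 0$ because the hypothesis requires $\beta/(\beta^{2}+\beta+1)$ to be defined, and $\beta\neq 1$ because $\beta=1$ would force $\theta^{2}+\theta=1$, which has no solution in $\f_{2^{s}}$ for $s$ odd (since $\tr(1)=1$).

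Now setting $u:=\beta+1\in k^{\times}$ and using $(\beta+1)^{2}=\beta^{2}+1$, one splits the fraction as
\[
\frac{\beta^{2}+\beta+1}{(\beta+1)^{2}} \;=\; 1+\frac{\beta}{(\beta+1)^{2}} \;=\; 1 + \frac{1}{u}+\frac{1}{u^{2}}.
\]
Applying $\tr$ and invoking the standard identity $\tr(x^{2})=\tr(x)$ for all $x\in\f_{2^{s}}$, the two $u$-terms cancel and one is left with $\tr(1)$, which equals $1$ precisely because $s$ is odd. This proves $\tr(1/(\theta^{2}+\theta+1))=1$, as required.

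The argument is essentially a short computation and there is no real obstacle; the only step that requires any thought is spotting the algebraic identity $(\beta^{2}+\beta+1)/(\beta+1)^{2}=1+1/u+1/u^{2}$, which is what makes the non-constant part vanish under $\tr$. The parity of $s$ enters only through $\tr(1)=s\bmod 2$, which is exactly why the conclusion is placed in the odd-$s$ subsection.
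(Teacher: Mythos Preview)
Your proof is correct and follows essentially the same approach as the paper: both reduce to showing $\tr\bigl(1/(\theta^{2}+\theta+1)\bigr)=1$ via the hypothesis, and both express $(\beta^{2}+\beta+1)/(\beta+1)^{2}$ as $1+w+w^{2}$ for a suitable $w$ (the paper takes $w=\beta/(\beta+1)$, you take $w=1/(\beta+1)$), so that $\tr(w)+\tr(w^{2})$ cancels and only $\tr(1)=1$ survives. Your added verification that $\beta\neq 1$ (hence the denominator $\beta+1$ is nonzero) is a nice point the paper leaves implicit.
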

\begin{proof}   Suppose that
\[\tr\left(\frac{\theta}{\theta^2+\theta+1}\right)= \tr\left(\frac{\theta+1}{\theta^2+\theta+1}\right)\] then
\begin{equation}\label{trace0}
\tr\left(\frac{1}{\theta^2+\theta+1}\right)=0.
\end{equation}
On the other hand, by hypothesis   \[\theta^2+\theta=\frac{\b}{\b^2+\b+1}\]
then
\[\frac{1}{\theta^2+\theta+1}=\frac{\b^2+\b+1}{\b^2+1}=1+\frac{\b}{\b+1}+\left(\frac{\b}{\b+1}\right)^2.\]
Finally, since $\tr(1)=1$ and $\tr(\a)=\tr(\a^2)$ for all $\a \in k$, we have
\[\tr\left(\frac{1}{\theta^2+\theta+1}\right)=\tr(1)+\tr\left(\frac{\b}{\b+1}\right)+\tr\left(\left(\frac{\b}{\b+1}\right)^2\right)=1,\] contradicting \ref{trace0}.
\end{proof}

Now we are in a position to state and prove one of our main results for the odd case.

\begin{teo}\label{teo4.2} Consider the tower $\mathcal{H}=\{F_i\}_{i\geq0}$ over $k=\f_{2^s}$ with $s$ odd. For every $i\geq 1$, the number of rational places $N(F_i)$ of  $F_i$ is \[N(F_i)=2(|S|+1),\]
where \[S=\left\{\b \in k: \tr\left(\frac{\b}{{\b}^2+\b+1}\right)=0\right\}.\]
\end{teo}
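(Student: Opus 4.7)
The plan is to exploit Artin--Schreier theory and Lemma~\ref{lema4.1} to show by induction that each rational place at level $i$ contributes a fixed number of rational places at level $i+1$. A rational place $P$ of $F_j$ lifts to a rational place of $F_{j+1}$ only through the Artin--Schreier extension $y^2+y=x_j/(x_j^2+x_j+1)$; since $s$ odd implies that $T^2+T+1$ is irreducible over $k=\f_{2^s}$, we have $\a_1,\a_2\notin k$, so the $x_j$-value at any rational place is never $\a_i$, and by Lemma~\ref{lema3.1}(vi) the extension $F_{j+1}/F_j$ is unramified at $P$. The Artin--Schreier splitting criterion then says that $P$ splits completely into two rational places of $F_{j+1}$ iff the equation $Y^2+Y=x_j(P)/(x_j(P)^2+x_j(P)+1)$ is solvable in $k$, which happens precisely when $x_j(P)\in S\cup\{\infty\}$ (using $\tr(0)=0$ and that the right-hand side vanishes at $x_j=\infty$); otherwise $P$ is inert. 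Setting
\[a_i := \left|\{P\text{ rational place of }F_i\text{ with }x_i(P)\in S\cup\{\infty\}\}\right|,\]
we then have $N(F_{i+1})=2a_i$, so the theorem reduces to proving $a_i=|S|+1$ for every $i\geq 0$.

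The base case $a_0=|S\cup\{\infty\}|=|S|+1$ is immediate. For the inductive step, fix a rational place $P$ of $F_i$ with $x_i(P)\in S\cup\{\infty\}$; since $P$ splits, it produces two rational places $Q,Q'$ of $F_{i+1}$ whose $x_{i+1}$-values are the two roots $\t,\t+1\in k$ of the Artin--Schreier equation at $P$. I claim that exactly one of these roots lies in $S$. If $x_i(P)=\infty$, then $\{\t,\t+1\}=\{0,1\}$, and since $s$ is odd we have $\tr(1)=1$, so $0\in S$ while $1\notin S$. If instead $x_i(P)=\b\in S$, Lemma~\ref{lema4.1} gives $\tr(\t/(\t^2+\t+1))\neq \tr((\t+1)/(\t^2+\t+1))$, so exactly one of $\t,\t+1$ lies in $S$. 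In either case $a_{i+1}=a_i$, which closes the induction.

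The main obstacle is isolating the correct conservation law: at each step two candidate continuations of a chain appear, but only one lies in the ``good'' set $S$ that permits further splitting, and this is exactly what Lemma~\ref{lema4.1} provides via the odd-parity assumption $\tr(1)=1$. Once this observation is made, the rest is essentially bookkeeping; care is needed only for the boundary place $P_\infty$ of $F_0$, which accounts for the additive $+1$ in $|S|+1$ at the base of the induction.
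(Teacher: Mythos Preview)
Your proof is correct and follows essentially the same approach as the paper: both argue by induction using Kummer/Artin--Schreier theory to detect splitting versus inertness, and both invoke Lemma~\ref{lema4.1} to show that from each splitting pair $\{\theta,\theta+1\}$ exactly one value lies in $S$. Your introduction of the counter $a_i$ is a slightly cleaner bookkeeping device, and your explicit treatment of the case $x_i(P)=\infty$ mirrors the paper's observation that $\phi\bmod P_\infty=\phi_0$.
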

\begin{proof}
Let $P$ be a rational place of $F_i$. Since $s$ is odd, we have that $\beta^2+\beta+1\neq 0$ for any $\b\in k$. Then the reduction modulo $P$ of the polynomial
\[\phi=T^2+T+\frac{x_i}{x_i^2+x_i+1}\in F_i[T],\]
is the polynomial
\[\phi_{\beta}=T^2+T+\frac{\beta}{\beta^2+\beta+1}\in k[T].\]
for some $\beta\in k$. On the other hand since
\[\tr\left(\frac{\beta}{\beta^2+\beta+1}\right)=0
\quad\text{if and only if}\quad \frac{\beta}{\beta^2+\beta+1}=\theta^2+\theta,\]
for some $\theta\in k$, it is clear that $\phi_{\beta}$ is irreducible over $k$ if and only if
\[\tr\left(\frac{\beta}{\beta^2+\beta+1}\right)=1.\]
Let $P_{\b}$ be the only zero of $x_0+\b$ in $F_0=k(x_0)$ and  $P_{\infty}$ be the only pole of $x_0$ in $F_0$. Then $\phi \mod P_{\b}=\phi_{\b}$ for $\beta\in k$ and $\phi \mod P_{\infty}=\phi_{0}$. If $\b\notin S\cup\{\infty\}$ we have that $\phi_{\b}$ is irreducible over $k$ and, by Kummer's Theorem (\cite[Theorem 3.3.7]{stichbook}), we see that  $P_{\b}$ is inert in $F_1$, thus there is only one place of $F_1$ lying over $P_{\b}$ and is of degree  $2$ for $\b\notin S\cup\{\infty\}$.  Now let $\b\in S\cup \{\infty\} $. In this case $\phi_{\b}$ splits into two different linear factors over $k$. Clearly if $\t\in k$ is a root of $\phi_{\b}$ then we have that
 $\phi_{\b}(T)=(T+\t)(T+(\t+1))$. Again by Kummer's Theorem, there are exactly two rational places  $Q_{\t}$, $Q_{\t+1}$ of $F_1=F_0(x_1)$ lying over $P_{\b}$. Then
 \[N(F_1)=2(|S|+1).\]
Kummer's Theorem also tell us that $x_1+\t \in Q_{\t}$ and
$x_1+(\t+1) \in Q_{\t+1}$ so the residual classes $x_1(Q_{\t})$ and
$x_1(Q_{\t+1})$ are $\t$ and $\t+1$ respectively.

 Let us consider now the rational places of $F_2$. Each such a place is lying over a rational place of the form above considered $Q_{\t}$ and $Q_{\t+1}$ for some $\t\in k$, so that the reduction of $\phi$ modulo these places are the polynomials $\phi_{\t}$ and $\phi_{\t+1}$.  Now Lemma \ref{lema4.1} implies  that the polynomial
\[\phi_{\t}(T)=T^2+T+\frac{\t}{{\t}^2+\t+1},\]
splits completely over $k$ (resp. is irreducible over $k$) if and only if  the polynomial
\[\phi_{\t+1}(T)=T^2+T+\frac{\t+1}{{\t}^2+\t+1}\] is irreducible over $k$ (resp. splits completely over $k$). Hence $Q_{\t}$ splits completely (resp. remains inert) if and only if  $Q_{\t+1}$ remains inert (resp. splits completely). Therefore, for each pair $Q_{\t}$, $Q_{\t+1}$ we have, again by Kummer's Theorem, that one of them remains inert, say $Q_{\t}$, and there are two rational places of $F_2$  over the other one, $Q_{\t+1}$. Thus,
\[N(F_2)=2(|S|+1).\]
In particular, we have proved that each rational place of $F_2$ lies over a rational place of $F_1$ of the form $Q_{\t}$ for some $\t\in k$, which splits completely in $F_2$ into two rational places of $F_2$ of the form $Q_{\g}$ and $Q_{\g+1}$ with residual classes $x_2(Q_\g)=\g$ and $x_2(Q_{\g+1})=\g+1$ by Kummer's Theorem.

By inductive hypothesis, $N(F_i)=2(|S|+1)$ and each rational place
of $F_{i}$ lies over a rational place of $F_{i-1}$ of the form
$Q_{\t}$ for some $\t\in k$, which splits completely into two
rational places of $F_{i}$ of the form $Q_{\delta}$ and
$Q_{\delta+1}$ with residual classes $x_i(Q_\delta)=\delta$ and
$x_i(Q_{\delta +1})=\delta+1$. Again we see that the reduction of
$\phi$ modulo these two places are the polynomials $\phi_{\delta}$
and $\phi_{\delta+1}$ so that by Lemma \ref{lema4.1} the polynomial
\[\phi_{\delta}(T)=T^2+T+\frac{\delta}{{\delta}^2+\delta+1},\]
splits completely over $k$ (resp. is irreducible over $k$) if and only if  the polynomial
\[\phi_{\delta+1}(T)=T^2+T+\frac{\delta+1}{{\delta}^2+\delta+1}\] is irreducible over $k$ (resp. splits completely over $k$). Therefore we have that either $Q_{\delta}$ or $Q_{\delta+1}$ splits completely in $F_{i+1}$ while the other one remains inert in $F_{i+1}$. Thus \[N(F_{i+1})=2(|S|+1)\] and we are done.
\end{proof}

As an immediate consequence of the above result we have:
\begin{teo}\label{badodd} The tower $\mathcal{H}$  is asymptotically bad over $\f_{2^s}$ for any odd integer $s$.

\end{teo}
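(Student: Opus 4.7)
The plan is to derive Theorem \ref{badodd} as an essentially immediate corollary of the preceding Theorem \ref{teo4.2}. The key observation is that Theorem \ref{teo4.2} shows $N(F_i) = 2(|S|+1)$ for every $i \geq 1$, that is, the number of rational places stabilizes to a constant bound independent of $i$. Meanwhile, each step $F_{i+1}/F_i$ is a degree-$2$ Artin--Schreier extension (nontrivial because the tower is nonstationary), so the degree $[F_i:F_0]$ equals $2^i$ and thus tends to infinity with $i$.

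From this, the splitting rate computes as
\[
\nu(\mathcal{H}) \;=\; \lim_{i\to\infty}\frac{N(F_i)}{[F_i:F_0]} \;\leq\; \lim_{i\to\infty}\frac{2(|S|+1)}{2^i} \;=\; 0,
\]
so $\nu(\mathcal{H})=0$.

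Since a tower over a finite field is asymptotically good precisely when both $\nu(\mathcal{F})>0$ and $\gamma(\mathcal{F})<\infty$ hold, and here the first of these fails, the tower $\mathcal{H}$ is asymptotically bad over $\f_{2^s}$ for every odd $s$. I expect no obstacle here: once Theorem \ref{teo4.2} is in hand, there is essentially nothing left to prove beyond quoting the definitions. One could equivalently observe that $g(F_i)\to\infty$ by property (d) of a tower (and indeed by Theorem \ref{finitegenus} combined with $[F_i:F_0]=2^i$), so $\lambda(\mathcal{H})=\lim N(F_i)/g(F_i)=0$ as well, giving the same conclusion.
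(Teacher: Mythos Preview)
Your proposal is correct and follows essentially the same approach as the paper: both derive the result immediately from Theorem \ref{teo4.2} by computing $\nu(\mathcal{H})=\lim_{i\to\infty}2(|S|+1)/2^i=0$. The only cosmetic difference is that the paper writes an equality rather than your inequality (since Theorem \ref{teo4.2} gives $N(F_i)=2(|S|+1)$ exactly for $i\geq 1$), but this is immaterial.
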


\begin{proof} From Theorem \ref{teo4.2} we have that \[\nu(\mathcal{H})=\lim_{i\rightarrow \infty}\frac{N(F_i)}{[F_i:F_0]}=\lim_{i\rightarrow \infty}\frac{2(|S|+1)}{2^i}=0.\]
\end{proof}

\subsection {\textbf{The even case.}}

\medskip

We  show now that  the tower $\mathcal{H}$ has positive splitting rate   over $\f_{2^s}$ for $s$ even so  that $\mathcal{H}$ is asymptotically good in this case.  We treat first the case $s=2$ giving in  Theorem \ref{goodf4} an explicit lower bound for the limit of the tower $\mathcal{H}$ over $\f_4$.

Throughout this subsection we  consider $\f_4:=\{0,1,\a,\a+1\}$  with  $\a^2+\a+1=0$  and  we denote by $\tr$ the trace map from $\f_4$ to $\f_2$.

\begin{propo}\label{propoinicial}
Let $F$ be a function field over $\f_4$ such that $\f_4$ is its full
field of constants and let $x\in F\setminus \f_4$. Let $F'=F(y)$
where $y$ satisfies \eqref{ec1}, i.e.
\[y^2+y=f(x):=\frac{x}{x^2+x+1}.\] Then $F'/F$ is an Artin-Schreier
extension of degree $2$  where any zero $P_{0}$ and any  pole $P_{\infty}$ of $x$ in $F$ respectively, split completely in $F^{\prime}/F$ into a zero $Q_0$ of $y$ and a zero $Q_1$ of $y+1$. Also any  zero $P_1$ of $x+1$ in $F$ splits completely in $F'/F$ into a zero $Q_{\alpha}$ of $y+\alpha$ and a zero $Q_{\alpha+1}$ of $y+\alpha+1$ in $F'$ (see Figure \ref{figu2} below).  Moreover
\[\nu_{Q_i}(y+i)=\nu_{P_0}(x)\qquad \text{ and }\qquad
\nu_{Q_i}(y+i)=-\nu_{P_\infty}(x),\] where $i=0,1$ and
\[\nu_{Q_\b}(y+\b)=2\nu_{P_1}(x+1),\]
where $\b=\a$ or $\b=\a+1$.
\end{propo}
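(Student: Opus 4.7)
The plan is to handle the three claims in sequence. First, I would set $\phi(T) = T^2+T+f(x) \in F[T]$ with $f(x) = x/(x^2+x+1)$. The degree $[F':F]=2$ follows from Artin-Schreier theory once we verify $\phi$ is irreducible over $F$: $f(x)$ has simple poles in $\f_4(x)\subseteq F$ at the zeros of $x^2+x+1$, and in the tower setting in which this proposition is applied these propagate to an odd-order pole of $f(x)$ in $F$, so no $z\in F$ satisfies $z^2+z=f(x)$.

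For the splittings I would apply Kummer's Theorem (\cite[Theorem 3.3.7]{stichbook}) to $\phi$ at each of the three prescribed places. At a zero $P_0$ of $x$, the denominator $x^2+x+1$ is a unit at $P_0$, so $\phi \bmod P_0 = T^2+T = T(T+1)$; Kummer yields two unramified places $Q_0, Q_1$ above $P_0$ with $y\in Q_0$ and $y+1\in Q_1$. At a pole $P_\infty$ of $x$, rewriting $f(x) = 1/(x+1+x^{-1})$ shows $f(x)\equiv 0 \pmod{P_\infty}$, yielding the same factorization and the analogous complete splitting. At a zero $P_1$ of $x+1$, the residue $f(x)(P_1) = 1/(1+1+1) = 1$, so $\phi \bmod P_1 = T^2+T+1 = (T+\a)(T+\a+1)$, giving two unramified places $Q_\a, Q_{\a+1}$ with the stated residues.

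For the valuations I would exploit two algebraic identities. In characteristic $2$ we have $y^2+y = (y+i)(y+i+1)$ for $i\in\{0,1\}$, so for $Q_i$ above $P\in\{P_0,P_\infty\}$, since $y+i+1$ is a unit at $Q_i$,
\[\nu_{Q_i}(y+i) = \nu_{Q_i}(y^2+y) = \nu_{Q_i}(f(x)) = \nu_P(f(x)),\]
which evaluates to $\nu_{P_0}(x)$ and $-\nu_{P_\infty}(x)$ respectively (using $\nu_{P_\infty}(x^2+x+1) = 2\nu_{P_\infty}(x)$). For $Q_\b$ above $P_1$ with $\b\in\{\a,\a+1\}$, using $\a(\a+1) = \a^2+\a=1$ in $\f_4$,
\[(y+\a)(y+\a+1) = y^2+y+1 = f(x)+1 = \frac{(x+1)^2}{x^2+x+1};\]
since only one of the left factors vanishes at $Q_\b$ while $x^2+x+1$ is a unit at $P_1$, this forces $\nu_{Q_\b}(y+\b) = 2\nu_{P_1}(x+1)$.

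The main obstacle is the characteristic-$2$ identity $f(x)+1 = (x+1)^2/(x^2+x+1)$ at $P_1$: the perfect square on the right is exactly what promotes the generic estimate $\nu_{Q_\b}(y+\b)\geq 1$ to the sharp value $2\nu_{P_1}(x+1)$, and this extra vanishing is what distinguishes the behavior at $P_1$ from that at the zero and the pole of $x$.
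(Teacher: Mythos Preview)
Your proof is correct and follows essentially the same route as the paper: reduce $\phi(T)=T^2+T+f(x)$ modulo each of $P_0,P_\infty,P_1$ and apply Kummer's Theorem for the splittings, then read off the valuations from $y(y+1)=f(x)$ and from the key identity $y^2+y+1=(x+1)^2/(x^2+x+1)$ at $P_1$. Your valuation arguments are in fact spelled out more carefully than in the paper; the only soft spot is the irreducibility of $\phi$ over a general $F$, but the paper does not address this either and simply relies on the tower context, just as you do.
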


\begin{figure}[h!t]\begin{center}
\begin{tikzpicture}[scale=0.7]

\draw[line width=0.5 pt](-3,0)--(-3,2); %
\node at (-3,-0.3)  {\footnotesize{$F$}  };%
 \node at (-3,2.3)  {\footnotesize{$F'$}};

\draw[-][line width=0.5 pt](0,0)--(-1,2);
\draw[-][line width=0.5 pt](0,0)--(1,2);
\node at (-1.1,2.3)  { \footnotesize{$Q_0$}  };
\node at (0.1,-0.3)  {\footnotesize{$P_0$}};
\node at (1.1,2.3)  {\footnotesize{$Q_1$}};

\coordinate (P) at (-0.7,1);
 \node[rotate=0] (N) at (P){ \tiny{$1$}};

\coordinate (P) at (0.7,1);
\node[rotate=0] (N) at (P){ \tiny{$1$}};

\draw[-][line width=0.5 pt](4,0)--(3,2);
\draw[-][line width=0.5 pt](4,0)--(5,2);
\node at (2.9,2.3)  { \footnotesize{$Q_0$}  };
\node at (4.2,-0.3)  {\footnotesize{$P_{\infty}$}};
\node at (5.1,2.3)  {\footnotesize{$Q_1$}};

\coordinate (P) at (3.3,1);
\node[rotate=0] (N) at (P){ \tiny{$1$}};

\coordinate (P) at (4.7,1);
\node[rotate=0] (N) at (P){ \tiny{$1$}};

\draw[-][line width=0.5 pt](8,0)--(9,2);%
\draw[-][line width=0.5 pt](8,0)--(7,2);%
\node at (9,2.3)  {\footnotesize{$Q_{\a+1}$}};%
\node at (8,-0.3) {\footnotesize{$P_1$}}; %
\node at (7.1,2.3){\footnotesize{$Q_{\a}$}};

\coordinate (P) at (8.3,1);
\node[rotate=0] (N) at (P){ \tiny{$1$}};

\coordinate (P) at (7.7,1);%
\node[rotate=0] (N) at (P){ \tiny{$1$}};%
\end{tikzpicture}
\caption{Decomposition of some zeros and poles}\label{figu2}
\end{center}
\end{figure}
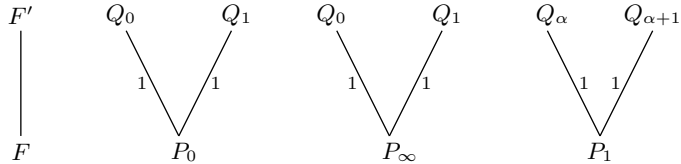

\begin{proof}
Consider the polynomial
 \[\varphi(T)=T^2+T+f(x)\in F[T].\] First notice that each zero or pole  $P$ of $x$ in $F$ is a zero of $f(x)$ in $F$. Then  \[\varphi(T)\mod P=T^2+T=T(T+1),\] so Kummer's Theorem tell us that the first two diagrams are correct, i.e.  there is a zero $Q_0$ of $y$ in $F'$ and a zero $Q_1$ of $y+1$ in $F'$ lying over $P$.  Therefore \[\nu_{Q_i}(y+i)=\nu_{P_0}(x)\qquad \text{ and }\qquad \nu_{Q_i}(y+i)=-\nu_{P_\infty}(x).\]
  Let $P_1$ be a zero of $x+1$ in $F$. Then $\nu_{P_1}(x+1)>0$ and $\nu_{P_1}(x)=0$ so that $\nu_{P_1}(f(x))=0$ and the residual class $x(P_1)=1$. Thus
 \[\varphi(T)\mod P_1=T^2+T+1=(T+\a)(T+\a+1) ,\] and  again Kummer's Theorem shows that the last diagram is correct.  Now, by rewriting \eqref{ec1} as
 \[y^2+y+1=\frac{(x+1)^2}{x^2+x+1},\]  we see that if $Q_\b|P_1$ then  $\nu_{Q_{\b}}(y+\b)=2\nu_{P_1}(x+1)$ as desired.
 \end{proof}

 \begin{propo}\label{reductor}  Under the conditions of Proposition \ref{propoinicial}, let us consider a zero $P_\beta$ of $x+\b$ in $F$.
 \end{propo}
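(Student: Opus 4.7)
The plan is to analyze the Artin--Schreier equation $y^2+y=f(x)$ at $P_\beta$ by reducing it to the setting of Hasse's criterion (Proposition~3.7.8 of \cite{stichbook}), in the vein of Lemma~\ref{lema3.1}(iv). Since Proposition~\ref{propoinicial} already covered $\beta\in\{0,1\}$ and $P_\infty$, the remaining case of interest is $\beta\in\{\alpha,\alpha+1\}$, where $\beta^2+\beta+1=0$. Then $x^2+x+1=(x+\alpha)(x+\alpha+1)$, while $\nu_{P_\beta}(x)=0=\nu_{P_\beta}(x+\beta+1)$, so $\nu_{P_\beta}(f(x))=-n$ with $n:=\nu_{P_\beta}(x+\beta)>0$.

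If $n$ is odd, Proposition~3.7.8 of \cite{stichbook} applies directly to the equation as written, yielding that $P_\beta$ is totally ramified in $F'/F$ with different exponent $n+1$, and that the unique place $Q$ of $F'$ above $P_\beta$ is a pole of $y$ of order $n$. If $n$ is even, I would perform a Hasse--Artin--Schreier reduction: iteratively substitute $y\mapsto y+z_i$ with $z_i\in F$ chosen so that $z_i^2$ cancels the leading Laurent term of $f(x)$ at $P_\beta$. Such $z_i$ can be produced in $F$ because $\f_4$ is perfect (so the required square roots exist in the residue field) and the local surjection $\O_{P_\beta}\twoheadrightarrow\f_4$ allows lifting each coefficient globally. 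The process strictly lowers the pole order at each step and terminates at an odd negative value $-m$, whereupon Proposition~3.7.8 applies.

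Together with the weakly ramified property established in Corollary~\ref{wram}, which forces $d(Q|P_\beta)=2$ whenever $P_\beta$ ramifies in a step of the tower, this pins down the terminal reduced pole order to exactly $-1$ and produces a clean description of the unique place $Q$ above $P_\beta$: it is a simple pole of $y+\sum_i z_i$ with $d(Q|P_\beta)=2$, and its restriction to $K(y)$ should be $R_\infty$, mirroring Lemma~\ref{lema3.1}(iv). The main obstacle is the reduction step itself: although it is mechanical in the completion at $P_\beta$, one must verify that all cancelling shifts $z_i$ can be realized within the global function field $F$, and also keep careful bookkeeping in order to translate valuations of the reduced variable back into statements about $y$ and $y+\beta$ at $Q$.
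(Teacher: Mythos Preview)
Your proposal rests on a misreading of what Proposition~\ref{reductor} actually asserts. The line you were shown is only the preamble; the proposition continues with two conditional items. Item~(i) says: \emph{if} there exists $u\in F$ with $\nu_{P_\beta}(f(x)+u^2+u)=-1$, then $P_\beta$ is totally ramified in $F'/F$. Item~(ii) says: \emph{if} $P_\beta$ is rational and there exists $u\in F$ with $\nu_{P_\beta}(f(x)+u^2+u)\geq 0$ and $\tr\bigl((f(x)+u^2+u)(P_\beta)\bigr)=0$, then $P_\beta$ splits completely in $F'/F$. The element $u$ is a \emph{hypothesis}, not something to be constructed, and the paper's proof is accordingly very short: for~(i) invoke \cite[Proposition~3.7.8]{stichbook}; for~(ii) note that $y+u$ generates $F'/F$ with minimal polynomial $T^2+T+(f(x)+u^2+u)$, whose reduction mod $P_\beta$ factors linearly over $\f_4$ by the trace condition, so Kummer's Theorem gives complete splitting. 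The valuation formulas for $y$ follow immediately from $y^2+y=f(x)$.

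Your argument goes in a different direction and contains a genuine gap. You attempt to \emph{produce} $u$ by iterated Artin--Schreier reduction and then assert that the process ``terminates at an odd negative value $-m$''. This is false in general: the reduction can equally well terminate at a non-negative value, in which case $P_\beta$ is unramified---precisely the situation of item~(ii). So your conclusion that $P_\beta$ is always totally ramified contradicts the content of the proposition you are meant to prove. Moreover, your appeal to Corollary~\ref{wram} to force $m=1$ is not available here: that corollary concerns the specific tower $\mathcal{H}$ and its steps $F_{i+1}/F_i$, whereas Proposition~\ref{reductor} (via Proposition~\ref{propoinicial}) is stated for an arbitrary function field $F$ over $\f_4$ and an arbitrary $x\in F\setminus\f_4$, so weak ramification of $F'/F$ is not part of the hypotheses. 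The actual construction of suitable elements $u$ is carried out later in the paper (Lemmas~\ref{leg4.8}, \ref{leg4.9}, \ref{lemadeunosg}) and is the hard part; Proposition~\ref{reductor} is merely the criterion they feed into.
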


 \begin{enumerate}[(i)]
 \item\label{red1}  Suppose that there exists an element $u \in F$ such that \[\nu_{P_\b}(f(x)+u^2+u)=-1,\]
 then  $P_\b$ is totally ramified in $F'/F$.  The only place of $F'$ lying over $P_{\beta}$ is a pole $Q_{\infty} $ of $y$ in $F'$ and \[\nu_{Q_\ii}(y)=-\nu_{P_\b}(x+\b).\]
 \item\label{red2} Suppose that $P_{\beta}$ is rational and that there exists an element $u \in F$ such that
 \[\nu_{P_\b}(f(x)+u^2+u)\geq0 \quad \text{and}\quad \tr((f(x)+u^2+u)(P_{\b}))=0.\]
 Then  $P_\b$ splits completely in $F'/F$ into two poles of $y$ in $F'$ and  \[2\nu_{Q_\ii}(y)=-\nu_{P_\b}(x+\b)\]
where $Q_{\ii}$ denotes any of these two poles (see Figure \ref{figu3} below).
\end{enumerate}

\begin{figure}[h!t]\begin{center}
\begin{tikzpicture}[scale=0.7]

\draw[line width=0.5 pt](8,0)--(8,2); %
\node at (8,-0.3)  {\footnotesize{$F$}  }; %
\node at (8,2.3){\footnotesize{$F'$}};

\draw[-][line width=0.5 pt](10,0)--(10,2); \node at (10,2.3)  {
\footnotesize{$Q_{\infty}$}  }; \node at (10.2,-0.3)
{\footnotesize{$P_{\beta}$}};

\coordinate (P) at (10.2,1); \node[rotate=0] (N) at (P){
\tiny{$2$}};

\draw[-][line width=0.5 pt](13,0)--(12,2); \draw[-][line width=0.5
pt](13,0)--(14,2); \node at (12.1,2.3)  { \footnotesize{$Q_{\ii}$}
}; \node at (13.1,-0.3)  {\footnotesize{$P_\b$}}; \node at
(14.1,2.3)  {\footnotesize{$Q_{\ii}$}};

\coordinate (P) at (12.3,1); \node[rotate=0] (N) at (P){
\tiny{$1$}};

\coordinate (P) at (13.7,1); \node[rotate=0] (N) at (P){
\tiny{$1$}};
\end{tikzpicture}
\caption{The two possible decompositions of $P_\b$}\label{figu3}
\end{center}\end{figure}
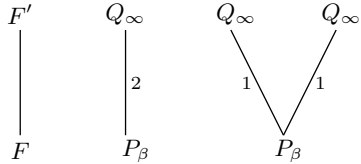

\begin{proof}  The first part of \eqref{red1} is  just \cite[Proposition 3.7.8]{stichbook}.
It is clear from the equation \eqref{ec1} defining the extension $F'/F$ that $\nu_{Q_\ii}(y)=-\nu_{P_\b}(x+\b)$.

 Let us see (ii). From   \cite[Proposition 3.7.8]{stichbook} and its proof, we have that $P_{\beta}$ is unramified in $F'$ and also $F^{\prime}=F(y+u)$ where
\[\varphi(T)= T^2+T+f(x)+u^2+u,\] is the minimal polynomial of $y+u$ over $F$. Then
the reduction of $\varphi(T)$ modulo $P_{\b}$ splits into linear factors over $\f_4$ because $\tr((f(x)+u^2+u)(P_{\b}))=0$ and thus  we can conclude, by Kummer's Theorem, that $P_{\b}$ splits completely in $F^{\prime}/F$. From equation \eqref{ec1} we see at once that $2\nu_{Q_\ii}(y)=-\nu_{P_\b}(x+\b)$.
\end{proof}

The construction of elements satisfying (i) or (ii) in the above proposition will be a crucial technical point in our proof of the existence of a rational place which splits completely in the tower $\mathcal{H}$. At this point we find convenient to introduce the following definition:
\begin{defi}\label{astype}
 Let $F'/F$ be an Artin-Schreier extension defined by \eqref{ec1} as in Proposition  \ref{propoinicial}. Let $P$ be a place of $F$. An element $u\in F$ is  called an Artin-Schreier element of type 1 for $P$ if
\[\nu_{P}(f(x)+u^2+u)=-m,\]
for some odd positive integer $m$. Suppose now that $P$ is rational. An element $u\in F$ is called an Artin-Schreier element of type 2 for $P$ if
\[\nu_{P}(f(x)+u^2+u)\geq0 \quad \text{and}\quad \tr((f(x)+u^2+u)(P))=0.\]

\end{defi}

\begin{remark}
The arguments given in Proposition \ref{reductor} show that a place $P$ of $F$ is totally ramified in $F'/F$ if an  Artin-Schreier element of type 1 for $P$ is found, and if $P$ is rational, $P$ splits completely in $F'/F$ if an Artin-Schreier element of type 2 for $P$ is found.
\end{remark}

We have seen  in Proposition \ref{propoinicial} that the places $P_i$ with $i=0,1,\infty$ splits completely
in $F'/F$. The goal in this section is to prove that  each zero
$Q_{\b}$ of $x_i+\b$ in $F_i$ with $\b\in \{\a,\a+1$\} splits completely in
$F_{i+1}/F_i$ for $i\geq 4$ whenever the place $Q_{\b}\cap F_2$ is a pole
of $x_2$ in $F_2$.  For these purposes, we will need to prove some results of technical nature.
\begin{lem}\label{techlem}Let  us consider the equation
\[y^2+y=f(x):=\frac{x}{x^2+x+1},\]  as in Proposition \ref{propoinicial}. Then
\[f(y)+\left(\frac{y+1}{x+1}\right)^2+\frac{y+1}{x+1}=y+\frac{1}{x^2+x+1}+\frac{1}{x+1}\cdot\]
\end{lem}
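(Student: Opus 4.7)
The plan is to verify the identity by a direct computation in characteristic $2$, using the defining relation $y^2+y=\frac{x}{x^2+x+1}$ at two carefully chosen moments. First I would add $1$ to both sides of this relation and use $x^2+1=(x+1)^2$ to get
\[y^2+y+1=\frac{(x+1)^2}{x^2+x+1},\]
so that $f(y)=\frac{y}{y^2+y+1}=\frac{y(x^2+x+1)}{(x+1)^2}$.

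Next I would bring the three summands on the left-hand side over the common denominator $(x+1)^2$ and expand the numerator; since in characteristic $2$ every doubled monomial vanishes, a short cancellation gives
\[f(y)+\left(\frac{y+1}{x+1}\right)^2+\frac{y+1}{x+1}=\frac{yx^2+y^2+x}{(x+1)^2}.\]
Then I would substitute $y^2=y+\frac{x}{x^2+x+1}$ into this numerator in order to isolate a clean $y(x+1)^2$ contribution (using $x^2+1=(x+1)^2$ once more); a routine simplification yields
\[\frac{yx^2+y^2+x}{(x+1)^2}=y+\frac{x^2}{(x+1)(x^2+x+1)}.\]

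To finish, it suffices to check that the non-$y$ part of the right-hand side equals this last fraction, which is immediate from
\[\frac{1}{x^2+x+1}+\frac{1}{x+1}=\frac{(x+1)+(x^2+x+1)}{(x+1)(x^2+x+1)}=\frac{x^2}{(x+1)(x^2+x+1)},\]
again using characteristic $2$ in the last numerator. The whole argument is a bookkeeping exercise in even characteristic; there is no conceptual obstacle, but the proof shortens dramatically if one uses the defining relation at the right moments (once to rewrite $y^2+y+1$ and once to reduce $y^2$) rather than brute-forcing the identity by clearing all denominators at once.
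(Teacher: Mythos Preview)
Your proof is correct and follows essentially the same approach as the paper's: both rewrite $f(y)$ via $y^2+y+1=\dfrac{(x+1)^2}{x^2+x+1}$, put everything over $(x+1)^2$, and simplify using the defining relation in characteristic~$2$. The only cosmetic difference is that the paper isolates the term $\dfrac{1}{x+1}$ at the start and recognizes $y^2+y+1$ at the end, whereas you substitute $y^2=y+f(x)$ and verify the non-$y$ part separately; the computations are otherwise identical.
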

\begin{proof}

\begin{align*}
f(y)+\left(\frac{y+1}{x+1}\right)^2+\frac{y+1}{x+1}&= \frac{y}{y^2+y+1}+\left(\frac{y+1}{x+1}\right)^2+\frac{y+1}{x+1} \\
           &= \frac{y}{f(x)+1}+\left(\frac{y+1}{x+1}\right)^2+\frac{y+1}{x+1} \\
&= y\frac{x^2+x+1}{(x+1)^2}+\left(\frac{y+1}{x+1}\right)^2+\frac{y}{x+1}+\frac{1}{x+1} \\
&= \frac{y(x^2+1)+yx+y^2+1+y(x+1)}{(x+1)^2}+\frac{1}{x+1}\\
&= \frac{y(x+1)^2}{(x+1)^2}+\frac{yx+y^2+1+y(x+1)}{(x+1)^2}+\frac{1}{x+1} \\
           &= y+\frac{1}{x^2+x+1}+\frac{1}{x+1}
\end{align*}
\end{proof}

%%%%%%%%%%%%%%%%%%%%%%%%%%%%%%%%%%%%%%%%%%%%%%%%%%%%%%%%%%%%%%%%%%%%%%%%%%%%%%%%%%%%%%%%%%%%%%%%%%%%%%%%%%% SEGUIR HASTA %%%%%%%%%%%%%%%

%%%%%%%%%%%%%%%%%%%%%%%%%%%%%%%%%%%%%%%%%%%%%%%%%%%%%%%%%%%%%%%%%%%%% LEMA 4.8. Generalizado

 From now on we will use the following notation: let $i\geq 0$. A zero of $x_i$ (resp. $x_i+1$) in $F_i$ will be denoted as $Q^i_0$ (resp. $Q^i_1$) and a pole of $x_i$ in $F_i$ will be denoted as $Q^i_{\ii}$. A zero of $x_i+\beta$ in $F_i$ will be denoted as $Q^i_{\beta}$ for $\beta\in\{\alpha,\alpha+1 \}$ and this will be the meaning of any symbol of the form $Q^i_{\gamma}$ when a Greek letter such as $\gamma$ is used as a subindex.  Also from now on all the considered places lie over $Q^0_1$ (the only zero of $x_0+1$ in the rational function field $F_0$ of $\mathcal{H}$). We will write in many occasions $P\subset Q$ when a place $Q$ lies over a place $P$.

We state and prove now three more technical results  (Lemmas \ref{leg4.8}, \ref{leg4.9} and \ref{lemadeunosg}). In the three  of them  we will assume that the following condition hold:

\begin{ramif}\label{ramifcond}
Let $k\geq 0$ and consider the function fields $F_k\subset F_{k+1}\subset F_{k+2}$ of the tower $\mathcal{H}$. For the places  $Q_1^k\subset Q_{\b}^{k+1}$ one and only one of these two conditions holds:
\begin{enumerate}[(R1)]
\item either $\nu_{Q_{1}^{k}}(x_k+1)=1$ and  $Q_{\b}^{k+1}$ is
totally ramified in $F_{k+2}/F_{k+1}$ (so that there is only one pole $Q^{k+2}_{\ii}$ of $x_{k+2}$ in $F_{k+2}$ lying over $Q^{k+1}_{\b}$)\label{tr}, or
\item $\nu_{Q_{1}^{k}}(x_k+1)=2$ and  $Q_{\b}^{k+1}$
splits completely in $F_{k+2}/F_{k+1}$  (so that there are exactly two poles $Q^{k+2}_{\ii}$ of $x_{k+2}$ in $F_{k+2}$ lying over $Q^{k+1}_{\b}$).\label{sc}
\end{enumerate}

\end{ramif}

\begin{lem}\label{leg4.8}  For $k\geq 0$ and $i\geq k+4$ let us consider the subsequence $\{F_j\}_{j=k}^{i-1}$  of the tower $\mathcal{H}$ an also the following sequence of places:
\[Q^{k}_{1}\subset Q^{k+1}_{\b}\subset Q^{k+2}_{\ii}\subset Q^{k+3}_{0}\subset Q^{k+4}_{0}\subset \cdots\subset Q^{i-1}_0, \]
where we are having only the places $Q^j_0$ for $k+3\leq j\leq i-1$. Then

\begin{enumerate}[(i)]
\item\label{4.8.1} $\nu_{Q^{k+3}_0}(x_{k+3}x_{k+2})=\nu_{Q^{j}_{0}}(x_{j}/x_{j-1})=0$  for all $j=k+4,\dots,i-1$  and $i>k+4$.
\item\label{4.8.2} $x_{k+3}x_{k+2}(Q^{k+3}_0)=(x_{j}/x_{j-1})(Q^{j}_0)=1$ for all $j=k+4,\dots,i-1$  and $i>k+4$.
\item\label{4.8.3} For all $j=k+3,\dots,i-1$
\[\nu_{Q^{j}_0}\left(\frac{1}{x_j}+x_{k+2}\right)\geq0 \quad \text{and} \quad \left(\frac{1}{x_j}+x_{k+2}\right)(Q^{j}_{0})=0.\]
\item\label{4.8.4} $Q^{i-1}_0|Q^{k+2}_{\ii}$ is not ramified in $F_{i-1}/F_{k+2}$ and  $\nu_{Q^{i-1}_0}(\d)=-1$  where $i\geq k+4$ and \[\d:=x_{k+2}+\frac{x_{k+1}+1}{x_{k}+1}\cdot\]
\item\label{4.8.5} $(x_{k+2}+(x_{k+1}\d)^2+x_{k+1}\d)(Q_{\ii}^{k+2})=\b.$

\end{enumerate}
\end{lem}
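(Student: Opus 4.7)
My plan is to first pin down the ramification and valuations along the chain $Q^k_1 \subset Q^{k+1}_\beta \subset Q^{k+2}_\infty \subset Q^{k+3}_0 \subset \cdots \subset Q^{i-1}_0$. Applying Proposition \ref{propoinicial} iteratively from level $k+2$ upward (pole to zero, then zero to zero at each subsequent step) shows that all of these extensions are unramified, so $e(Q^{i-1}_0 | Q^{k+2}_\infty) = 1$, which already gives the non-ramification statement in (iv). The same proposition also yields $\nu_{Q^{k+3}_0}(x_{k+3}) = -\nu_{Q^{k+2}_\infty}(x_{k+2})$ and $\nu_{Q^{j}_0}(x_{j}) = \nu_{Q^{j-1}_0}(x_{j-1})$ for $j \geq k+4$, which combined with the trivial ramification indices gives (i) immediately.

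For (ii), I would manipulate the recursion in two forms. Rewriting $y^2+y = x/(x^2+x+1)$ as $(x_j/x_{j-1})(x_j+1)(x_{j-1}^2+x_{j-1}+1) = 1$ and evaluating at $Q^j_0$, where $x_j, x_{j-1} \to 0$, gives $(x_j/x_{j-1})(Q^j_0) = 1$ for $j \geq k+4$. For the product $x_{k+3} x_{k+2}$, at $Q^{k+3}_0$ one has $x_{k+3} \to 0$ and $x_{k+2} \to \infty$; using $x_{k+3}(x_{k+3}+1) = x_{k+2}/(x_{k+2}^2+x_{k+2}+1)$ together with $(x_{k+2}^2+x_{k+2}+1)/x_{k+2}^2 \to 1$ yields $x_{k+3} x_{k+2}(Q^{k+3}_0) = 1$. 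I would prove (iii) by induction on $j$. In characteristic two the recursion simplifies to $1/x_{k+3} + x_{k+2} = x_{k+3}/(x_{k+3}+1) + 1/x_{k+2}$ (base case) and $1/x_j + 1/x_{j-1} = x_j/(x_j+1) + x_{j-1}$ (inductive gadget), both of which manifestly have non-negative valuation and value $0$ at the relevant place; combining the latter with the inductive hypothesis transferred from $Q^{j-1}_0$ via $e=1$ completes the step.

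The crux is (iv) and (v), where the key move is to invoke Lemma \ref{techlem}. Applying it with $x = x_k$ and $y = x_{k+1}$ (valid since $x_{k+1}^2+x_{k+1} = f(x_k)$) and substituting $x_{k+2}^2+x_{k+2} = f(x_{k+1})$ collapses the identity into
\[
\delta^2 + \delta \;=\; x_{k+1} + \frac{1}{x_k^2+x_k+1} + \frac{1}{x_k+1} \;=:\; A.
\]
From the ramification analysis one checks $\nu_{Q^{k+2}_\infty}(x_k+1) = 2$ in both cases R1 and R2, while $x_{k+1}$ and $1/(x_k^2+x_k+1)$ are units at $Q^{k+2}_\infty$ (with values $\beta$ and $1$). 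Hence $\nu_{Q^{k+2}_\infty}(A) = -2$, which in characteristic two forces $\nu_{Q^{k+2}_\infty}(\delta) = -1$, and (iv) follows because the chain from $Q^{k+2}_\infty$ to $Q^{i-1}_0$ is unramified. For (v), I would substitute $x_{k+2} = \delta + (x_{k+1}+1)/(x_k+1)$ together with $\delta^2 = A + \delta$ into $x_{k+2} + (x_{k+1}\delta)^2 + x_{k+1}\delta$, reducing it to
\[
\delta(x_{k+1}^2+x_{k+1}+1) \;+\; x_{k+1}^3 \;+\; \frac{x_{k+1}^2}{x_k^2+x_k+1} \;+\; \frac{x_{k+1}^2+x_{k+1}+1}{x_k+1}.
\]
At $Q^{k+2}_\infty$ the first and last summands have strictly positive valuation (using $\nu(x_{k+1}^2+x_{k+1}+1) = 4$), while the middle pair evaluates to $\beta^3 + \beta^2 = 1 + (\beta+1) = \beta$, giving the value $\beta$ claimed in (v). The main obstacle is recognising that Lemma \ref{techlem}, combined with the recursion, yields the clean Artin-Schreier-type identity $\delta^2+\delta = A$; once that is secured, every remaining step is valuation bookkeeping controlled by $\beta^2+\beta+1 = 0$.
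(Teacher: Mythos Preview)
Your proof is correct and follows the same overall architecture as the paper: Proposition~\ref{propoinicial} controls the ramification along the chain and gives (i), (ii), and the unramified part of (iv); Lemma~\ref{techlem} yields $\delta^2+\delta = x_{k+1} + (x_k^2+x_k+1)^{-1} + (x_k+1)^{-1}$, from which $\nu_{Q^{k+2}_\infty}(\delta)=-1$; and (iii) is done by induction on $j$.

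Where you diverge is in the algebra for (iii) and (v), and your route is cleaner in both places. For (iii), the paper expands $1/x_j + x_{k+2}$ into a five-term sum and evaluates each piece; your identity $1/x_j + 1/x_{j-1} = x_j/(x_j+1) + x_{j-1}$ (a direct rearrangement of the recursion) reduces the inductive step to a two-term check plus the hypothesis. For (v), the paper runs a separate calculation using the auxiliary relations \eqref{ec4a}--\eqref{ec4c} to rewrite $x_{k+2}+(x_{k+1}\delta)^2+x_{k+1}\delta$ in terms of $x_k,x_{k+1},x_{k+2}$ and then evaluate; you instead recycle the identity $\delta^2+\delta=A$ already established for (iv), substitute $x_{k+2}=\delta+(x_{k+1}+1)/(x_k+1)$, and collapse everything to
\[
\delta(x_{k+1}^2+x_{k+1}+1) + x_{k+1}^3 + \frac{x_{k+1}^2}{x_k^2+x_k+1} + \frac{x_{k+1}^2+x_{k+1}+1}{x_k+1},
\]
where the outer terms vanish (valuations $3$ and $2$) and the inner pair gives $\beta^3+\beta^2=\beta$. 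This is more economical and makes transparent why the same $\delta$ that governs (iv) also settles (v). The paper's longer computation has the compensating advantage of producing the explicit closed form displayed before \eqref{deltasquare}, which it later reuses verbatim; your version trades that reusable formula for a shorter argument here.
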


\begin{proof} By Proposition \ref{propoinicial} we know
that $\nu_{Q_{\b}^{k+1}}(x_{k+1}+\b)=2\nu_{Q_{1}^{k}}(x_{k}+1)$ so  that
\[2\nu_{Q_{\ii}^{k+2}}(x_{k+2})=-e(Q_{\ii}^{k+2}|Q_{\b}^{k+1})\nu_{Q_{\b}^{k+1}}(x_{k+1}+\b)=-2e(Q_{\ii}^{k+2}|Q_{\b}^{k+1})\nu_{Q_{1}^{k}}(x_{k}+1).\]
 Since the ramification condition holds, we have  that \[e(Q_{\ii}^{k+2}|Q_{\b}^{k+1})\nu_{Q_{1}^{k}}(x_{k}+1)=2,\] thus
\[\nu_{Q_{\ii}^{k+2}}(x_{k+2})=-e(Q_{\ii}^{k+2}|Q_{\b}^{k+1})\nu_{Q_{1}^{k}}(x_{k}+1)=-2.\]
 From Proposition \ref{propoinicial}  we see that
$Q_0^{j}|Q_{\ii}^{k+2}$ is  unramified in $F_j/F_{k+2}$ and
\begin{equation}\label{doblecero}
\nu_{Q_{0}^{j}}(x_{j})=-\nu_{Q_{\ii}^{k+2}}(x_{k+2})=2,
\end{equation}
 for all
$j=k+3,\dots,i-1$.  Consequently, for $i>k+4$ and  $k+4\leq j\leq i-1$  we have that
\[\nu_{Q_0^{k+3}}(x_{k+3}x_{k+2})
=2-2=0=\nu_{Q_0^{j}}(x_{j})-\nu_{Q_0^{j-1}}(x_{j-1})=\nu_{Q_0^{j}}\left(\frac{x_j}{x_{j-1}}\right).\]
On the other hand, since
\begin{equation}\label{eculeg4.8}
(x_{k+3}+1)x_{k+3}x_{k+2}=\frac{x_{k+2}^2}{x_{k+2}^2+x_{k+2}+1}
\quad \text{ and } \quad
\frac{(x_{j}+1)x_{j}}{x_{j-1}}=\frac{1}{x_{j-1}^2+x_{j-1}+1},
\end{equation}
it follows that
\[ (x_{k+3}x_{k+2})(Q_0^{k+3})=1\quad\text{ and}\quad\frac{x_{j}}{x_{j-1}}(Q_0^{j})=\frac{1}{x_{j-1}^2+x_{j-1}+1}(Q_0^{j-1})=1,\]
 for $i$ and $j$ as above. Let us see now that $\nu_{Q_0}^{i-1}(\d)=-1$.  By Lemma~\ref{techlem} we have that
\begin{align*}
\delta^2+\delta
 &=x_{k+2}^2+x_{k+2}+\left(\frac{x_{k+1}+1}{x_{k}+1}\right)^2+\frac{x_{k+1}+1}{x_{k}+1}\\
\\
&=f(x_{k+1})+\left(\frac{x_{k+1}+1}{x_{k}+1}\right)^2+\frac{x_{k+1}+1}{x_{k}+1}\\
\\
&=x_{k+1}+\frac{1}{x_k^2+x_k+1}+\frac{1}{x_k+1}. \\
\end{align*}
Since $Q_0^{i-1}\cap F_{k+1}=Q_{\beta}^{k+1}$ and $\nu_{Q_{\beta}^{k+1}}(x_{k+1})=0$ we have
\[\nu_{Q_0}^{i-1}(x_{k+1})=\nu_{Q_0}^{i-1}\left(\frac{1}{x_k^2+x_k+1}\right)=0,\]
and
\[\nu_{Q_0}^{i-1}\left(\frac{1}{x_k+1}\right)=-e(Q_{\ii}^{k+2}|Q_{\b}^{k+1})\nu_{Q_{1}^{k}}(x_{k}+1)=-2.\]
Then
\[\nu_{Q_0^{i-1}}(\delta^2+\delta)=\nu_{Q_0^{i-1}}\left(x_{k+1}+\frac{1}{x_k^2+x_k+1}+\frac{1}{x_k+1}\right)=-2,\]
so that $\nu_{Q_0^{i-1}}(\delta)=-1$. This finishes the proof of \eqref{4.8.1}, \eqref{4.8.2} and \eqref{4.8.4}.

 Now we see \eqref{4.8.3} and we proceed by induction on $j$. For $j=k+3$, using the
first identity in \eqref{eculeg4.8}, we have that
\begin{align*}
\frac{1}{x_{k+3}}+x_{k+2}&=\frac{(x_{k+3}+1)x_{k+2}}{(x_{k+3}+1)x_{k+3}x_{k+2}}+x_{k+2}\\
&=(x_{k+3}+1)\frac{x_{k+2}^2+x_{k+2}+1}{x_{k+2}}+x_{k+2}\\
&=(x_{k+3}+1)\left(\frac{1}{x_{k+2}}+1+x_{k+2}\right)+x_{k+2}\\
&=\frac{x_{k+3}}{x_{k+2}}+x_{k+3}+x_{k+3}x_{k+2}+1+\frac{1}{x_{k+2}}.
\end{align*}
Then
\[\nu_{Q_0^{k+3}}\left(\frac{1}{x_{k+3}}+x_{k+2}\right)\geq 0,\]
and
\begin{align*}
\left(\frac{1}{x_{k+3}}+x_{k+2}\right)(Q_0^{k+3})
&=\frac{x_{k+3}}{x_{k+2}}(Q_0^{k+3})+x_{k+3}(Q_0^{k+3})+(x_{k+3}x_{k+2})(Q_0^{k+3})+\\
& \quad +1+\frac{1}{x_{k+2}}(Q_0^{k+3})=0+0+1+1+0=0.
\end{align*}

 Assume now that the result is valid for $j-1\geq k+3$. By the second
identity in \eqref{eculeg4.8} we obtain
\begin{align*}
\frac{1}{x_j}+x_{k+2}&=  \frac{(x_j+1)}{x_{j-1}}\frac{x_{j-1}}{(x_j+1)x_j}+x_{k+2}\\
&= \frac{(x_j+1)}{x_{j-1}}(x_{j-1}^2+x_{j-1}+1)+x_{k+2}\\
&= (x_j+1)\left(\frac{1}{x_{j-1}}+1+x_{j-1}\right)+x_{k+2}\\
&=\frac{x_{j}}{x_{j-1}}+x_j+x_jx_{j-1}+x_{j-1}+1+\frac{1}{x_{j-1}}+x_{k+2}.
\end{align*}

Then
\[\nu_{Q_0^{j}}\left(\frac{1}{x_j}+x_{k+2}\right)\geq 0,\]
 and
\begin{align*}
\left(\frac{1}{x_j}+x_{k+2}\right)(Q_0^{j})
&=\frac{x_{j}}{x_{j-1}}(Q_0^{j})+x_j(Q_0^{j})+(x_jx_{j-1})(Q_0^{j})+x_{j-1}(Q_0^{j})+1+\\
&\quad +\left(\frac{1}{x_{j-1}}+x_{k+2}\right)(Q_0^{j})=1+0+0+0+1+0=0.
\end{align*}

It remains to prove \eqref{4.8.5}. For the proof of this item we will use an identity which is a consequence of some tedious manipulations,
so for the sake of simplicity we find convenient to write $x=x_k$, $y=x_{k+1}$ and $z=x_{k+2}$. We have now that
$\d=z+\frac{y+1}{x+1}$ and $x,y,z$ satisfy the following relations:
\begin{align}
z^2&=z+f(y), \label{ec4a}\\
y^2f(y)&=1+y+\frac{1}{y^2+y+1},\label{ec4b}\\
\frac{1}{y^2+y+1}+\frac{1}{(x+1)^2}+\frac{1}{x+1}&=\frac{x^2+x+1}{(x+1)^2}+\frac{x}{(x+1)^2}=1.\label{ec4c}
\end{align}
Then
\begin{align*}
z+(y\d)^2+y\d &=z+\left[\left(z+\frac{y+1}{x+1}\right)y\right]^2+\left(z+\frac{y+1}{x+1}\right)y\\
&= z+z^2y^2+zy+\frac{y^2(y+1)^2}{(x+1)^2}+\frac{y(y+1)}{(x+1)} \\
           &= z+(z+f(y))y^2+zy+\frac{y^2(y+1)^2}{(x+1)^2}+\frac{y(y+1)}{(x+1)} \quad (\text{by } \eqref{ec4a}) \\
           &= z+zy^2+f(y)y^2+zy +\frac{y^2(y+1)^2}{(x+1)^2}+\frac{y(y+1)}{(x+1)}\\
           &= z(y^2+y+1)+1+y+\frac{1}{y^2+y+1}+\frac{y^2(y+1)^2}{(x+1)^2}+\\
           &\quad +\frac{y(y+1)}{(x+1)} \quad (\text{by } \eqref{ec4b}) \\
           &= z(y^2+y+1)+1+y+\frac{1}{y^2+y+1}+\frac{y^4+y^2+1+1}{(x+1)^2}+\\
           &\quad +\frac{y^2 + y+1 +1}{(x+1)}\\
           &= z(y^2+y+1)+1+y+\frac{(y^2+y+1)^2}{(x+1)^2}+\\
           &\quad+\frac{y^2+y+1}{(x+1)}+1 \qquad (\text{by } \eqref{ec4c}) \\
           &= z\frac{(x+1)^2}{x^2+x+1}+y+\frac{(x+1)^2}{(x^2+x+1)^2}+\frac{x+1}{x^2+x+1}\cdot\\
\end{align*}
Therefore
\begin{align*}
x_{k+2}+(x_{k+1}\d)^2+x_{k+1}\d &=
x_{k+2}\frac{(x_k+1)^2}{x_k^2+x_k+1}+x_{k+1}+\frac{(x_k+1)^2}{(x_k^2+x_k+1)^2}+\\
&\quad+\frac{x_k+1}{x_k^2+x_k+1}.
\end{align*}
Since $\nu_{Q^{k+2}_{\infty}}(x_{k+1})=0$ and the other summands
 in the right hand side of the previous equality have positive
valuations, we have that
\begin{equation}\label{deltasquare}
\nu_{Q^{k+2}_{\ii}}(x_{k+2}+(x_{k+1}\d)^2+x_{k+1}\d )=0,
\end{equation}
 and
\[(x_{k+2}+(x_{k+1}\d)^2+x_{k+1}\d)(Q^{k+2}_{\ii})=x_{k+1}(Q^{k+2}_\infty)=x_{k+1}(Q^{k+1}_\b)=\b.\]

\end{proof}

\begin{lem}\label{leg4.9}  For $k\geq 0$ and $i\geq k+4$ let us consider the subsequence $\{F_j\}_{j=k}^{i+2}$ of the tower
$\mathcal{H}$ and the sequence of places
\[Q^{k}_{1}\subset Q^{k+1}_{\b}\subset Q^{k+2}_{\ii}\subset Q^{k+3}_{0}\subset \cdots\subset Q^{i-1}_0 \subset Q^i_1\subset Q^{i+1}_{\theta}\subset Q^{i+2}_{\ii},  \]
where we are having only the places $Q^j_0$ for $k+3\leq j\leq i-1$.
 Then $Q_{\theta}^{i+1}$ splits completely in $F_{i+2}/F_{i+1}$  and the following holds:
\begin{enumerate}[(i)]
\item\label{i1} $\nu_{Q_1^{i}}(x_{i}+1)= \nu_{Q_1^{i}}({x_{i-1}})
=2$, $\nu_{Q_1^{i}}(\frac{x_{i}+1}{x_{i-1}})=0$  and  $\frac{x_{i}+1}{x_{i-1}}(Q_1^{i})=1$.
 \item\label{i2} $\nu_{Q_{\ii}^{i+2}}(x_{i}+1)=-\nu_{Q_{\ii}^{i+2}}(x_{i+2})=2$,  $\nu_{Q_{\ii}^{i+2}}(x_{i+1})=0$ and  $\nu_{Q_{\ii}^{i+2}}(\d^{\prime})=-1$ where \[\d^{\prime}=x_{i+2}+\frac{x_{i+1}+1}{x_{i}+1}.\]
\item\label{i3} $(x_{i+2}+(x_{i+1} \d^{\prime})^2+x_{i+1}\d^{\prime})(Q_{\ii}^{i+2})=\theta.$
\end{enumerate}
\end{lem}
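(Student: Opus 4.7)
My plan is to extend Lemma \ref{leg4.8} by two more levels: first establish (i), which pins down the ramification behavior at $Q_1^i$; then use the Ramification condition (with $k$ replaced by $i$) to conclude that $Q_\theta^{i+1}$ splits completely in $F_{i+2}/F_{i+1}$; and finally reproduce, shifted up by the length of the intermediate chain, the computations of parts (iv) and (v) of Lemma \ref{leg4.8}.

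For (i), the key input is that Lemma \ref{leg4.8} gives $\nu_{Q_0^{i-1}}(x_{i-1})=2$ (see equation \eqref{doblecero} in its proof). Applying Proposition \ref{propoinicial} to $F_i/F_{i-1}$ with the dictionary $x=x_{i-1}$, $y=x_i$, the zero $Q_0^{i-1}$ of $x_{i-1}$ splits unramifiedly into a zero of $x_i$ and the zero $Q_1^i$ of $x_i+1$; the proposition yields $\nu_{Q_1^i}(x_i+1)=\nu_{Q_0^{i-1}}(x_{i-1})=2$, and unramifiedness gives $\nu_{Q_1^i}(x_{i-1})=2$ as well, so $\nu_{Q_1^i}\!\left(\tfrac{x_i+1}{x_{i-1}}\right)=0$. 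Rewriting the basic relation as $\tfrac{x_i+1}{x_{i-1}}=\tfrac{1}{x_i(x_{i-1}^2+x_{i-1}+1)}$ and using $x_i(Q_1^i)=1$, $x_{i-1}(Q_1^i)=0$ then shows that the residue equals $1$.

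With (i) in hand, the value $\nu_{Q_1^i}(x_i+1)=2$ rules out (R1) and forces (R2) at level $i$; hence $Q_\theta^{i+1}$ splits completely in $F_{i+2}/F_{i+1}$. For (ii), Proposition \ref{propoinicial} applied to $F_{i+1}/F_i$ gives $\nu_{Q_\theta^{i+1}}(x_{i+1}+\theta)=2\nu_{Q_1^i}(x_i+1)=4$; the recursion $x_{i+2}^2+x_{i+2}=x_{i+1}/(x_{i+1}^2+x_{i+1}+1)$ then has right-hand side of valuation $-4$ at $Q_\theta^{i+1}$ (only the factor $x_{i+1}+\theta$ of the factorization $x_{i+1}^2+x_{i+1}+1=(x_{i+1}+\alpha)(x_{i+1}+\alpha+1)$ vanishes there, to order $4$), and complete splitting halves the pole order, yielding $\nu_{Q_\infty^{i+2}}(x_{i+2})=-2$. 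Since $e=1$ at every step above $Q_1^i$, the identities $\nu_{Q_\infty^{i+2}}(x_i+1)=2$ and $\nu_{Q_\infty^{i+2}}(x_{i+1})=0$ follow at once (using $x_{i+1}(Q_\theta^{i+1})=\theta\neq 0$). For $\delta'$, I would apply Lemma \ref{techlem} with $x=x_i$, $y=x_{i+1}$ to obtain
\[
\delta'^{\,2}+\delta'=x_{i+1}+\frac{1}{x_i^2+x_i+1}+\frac{1}{x_i+1},
\]
whose three summands have valuations $0$, $0$, $-2$ at $Q_\infty^{i+2}$, so $\nu_{Q_\infty^{i+2}}(\delta')=-1$.

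For (iii) I would reuse, one level higher, the algebraic identity derived inside the proof of part (v) of Lemma \ref{leg4.8}, now with $(x,y,z)=(x_i,x_{i+1},x_{i+2})$:
\[
x_{i+2}+(x_{i+1}\delta')^2+x_{i+1}\delta'=x_{i+2}\frac{(x_i+1)^2}{x_i^2+x_i+1}+x_{i+1}+\frac{(x_i+1)^2}{(x_i^2+x_i+1)^2}+\frac{x_i+1}{x_i^2+x_i+1}.
\]
Since $x_i^2+x_i+1$ is a unit at $Q_\infty^{i+2}$ and $\nu_{Q_\infty^{i+2}}(x_i+1)=2$ dominates $\nu_{Q_\infty^{i+2}}(x_{i+2})=-2$, each of the three terms containing $x_i+1$ in the numerator has strictly positive valuation and so vanishes in the residue, leaving $x_{i+1}(Q_\infty^{i+2})=\theta$. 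The main obstacle is bookkeeping rather than a new idea: one must first confirm the jump $\nu_{Q_1^i}(x_i+1)=2$ (in contrast to the order-$1$ behavior of $x_k+1$ at the bottom of the chain) before (R2) can be invoked to secure the splitting, and then track residues carefully through the two further extensions so that the reductions $x_i^2+x_i+1\to 1$ and $x_{i+1}\to\theta$ can be read off unambiguously.
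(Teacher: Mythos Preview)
There is a genuine gap in your argument for the splitting of $Q_\theta^{i+1}$. You write that ``$\nu_{Q_1^i}(x_i+1)=2$ rules out (R1) and forces (R2) at level $i$'', but the Ramification condition is not an established dichotomy: it is the \emph{hypothesis} of Lemmas~\ref{leg4.8}--\ref{lemadeunosg}, and it is assumed only at the fixed bottom index $k$. At level $i$ nothing is assumed; the assertion that $Q_\theta^{i+1}$ splits completely is exactly the conclusion to be proved. Knowing $\nu_{Q_1^i}(x_i+1)=2$ only tells you that $\nu_{Q_\theta^{i+1}}(f(x_{i+1}))=-4$ is even; it does not by itself exclude the possibility that after Artin--Schreier reduction one lands on a residue of trace~$1$, i.e.\ that $Q_\theta^{i+1}$ is inert rather than split in $F_{i+2}/F_{i+1}$.

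The paper closes this gap by explicitly constructing an Artin--Schreier element of type~2 for $Q_\theta^{i+1}$, namely
\[
u=\frac{x_{i+1}+1}{x_i+1}+x_{k+1}\delta,\qquad \delta=x_{k+2}+\frac{x_{k+1}+1}{x_k+1},
\]
and expanding $f(x_{i+1})+u^2+u$ via Lemma~\ref{techlem}. The correction term $x_{k+1}\delta$, carried up from level $k$, is precisely what allows the dangerous pole $1/x_{i-1}$ to be rewritten as $\bigl(\tfrac{1}{x_{i-1}}+x_{k+2}\bigr)+\bigl(x_{k+2}+(x_{k+1}\delta)^2+x_{k+1}\delta\bigr)$; parts~\eqref{4.8.3} and~\eqref{4.8.5} of Lemma~\ref{leg4.8} then supply the residues $0$ and $\beta$, and the total residue $\theta+1+\beta\in\{0,1\}$ has trace~$0$. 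This is what actually forces the splitting. Your treatment of (i), (ii), and (iii) is otherwise in line with the paper's, but without this construction the central claim is unsupported, and the induction in Theorem~\ref{goodeven}---which relies on (R2) at level $i$ being \emph{established} as output of the lemma, not taken as input---would not go through.
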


\begin{proof}Let us prove  \eqref{i1}.  From \eqref{doblecero} and Proposition \ref{propoinicial} we have that $Q_0^{i-1}$ is a zero (of order $2$) of $x_{i-1}$ and splits completely in $F_{i}/F_{i-1}$. By hypothesis $Q_1^i|Q_0^{i-1}$, then $Q_1^{i}$ is a  zero (of order $2$) of $x_{i}+1$  so that $\nu_{Q_1^{i}}(\frac{x_{i}+1}{x_{i-1}})=0$. On the other hand since \[\frac{x_{i}(x_{i}+1)}{x_{i-1}}=\frac{1}{x_{i-1}^2+x_{i-1}+1},\]
 we see at once that  $\frac{x_{i}+1}{x_{i-1}}(Q_1^{i})=1$  and we are done with item  \eqref{i1}. Now we show  that
\[u:=\frac{x_{i+1}+1}{x_{i}+1}+x_{k+1}\d\] is  an Artin-Schreier  element of type 2 for
$Q_{\theta}^{i+1}$ where $\delta$ is as in (iv) of Lemma \ref{leg4.8}. Indeed, using Lemma~\ref{techlem} we have
\begin{align*}
f(x_{i+1})+u^2+u &= \frac{x_{i+1}}{x_{i+1}^2+x_{i+1}+1}+\left(\frac{x_{i+1}+1}{x_{i}+1}\right)^2+\frac{x_{i+1}+1}{x_{i}+1}+(x_{k+1}\d)^2\\
&\quad + (x_{k+1}\d) \\
&= x_{i+1}+\frac{1}{x_{i}^2+x_{i}+1}+\frac{1}{x_{i}+1}+(\delta x_{k+1})^2+\delta x_{k+1} \\
&= x_{i+1}+\frac{1}{x_{i}^2+x_{i}+1}+\frac{x_{i}}{f(x_{i-1})}+(\delta x_{k+1})^2+\delta x_{k+1} \\
&= x_{i+1}+\frac{1}{x_{i}^2+x_{i}+1}+x_{i}\left(x_{i-1}+1+\frac{1}{x_{i-1}}\right)+(\delta x_{k+1})^2+\\
&\quad +\delta x_{k+1} \\
&= x_{i+1}+\frac{1}{x_{i}^2+x_{i}+1}+x_{i}x_{i-1}+x_{i}+\frac{x_{i}}{x_{i-1}}+(\delta x_{k+1})^2+\\
&\quad + \delta x_{k+1} \\
&= x_{i+1}+\frac{1}{x_{i}^2+x_{i}+1}+x_{i}x_{i-1}+x_{i}+\frac{x_{i}+1}{x_{i-1}}+\\
&\quad +\left(\frac{1}{x_{i-1}}+x_{k+2}\right)+(x_{k+2}+(\delta x_{k+1})^2+\delta x_{k+1}).
\end{align*}
Now from Proposition \ref{propoinicial},  \eqref{4.8.3} and \eqref{4.8.5} of Lemma \ref{leg4.8} and \eqref{deltasquare}, we have that $Q_{\theta}^{i+1}$ is not a pole of any of the terms in the above last equality. Therefore
\[\nu_{Q_{\theta}^{i+1}}(f(x_{i+1})+u^2+u) \geq 0.\]
 Also the residual class $(f(x_{i+1})+u^2+u)(Q_{\theta}^{i+1})$ is
\begin{align*}
  & x_{i+1}(Q_{\theta}^{i+1})+\frac{1}{x_{i}^2+x_{i}+1}(Q_{\theta}^{i+1})+(x_{i}x_{i-1})(Q_{\theta}^{i+1})+x_{i}(Q_{\theta}^{i+1})+\frac{x_{i}+1}{x_{i-1}}(Q_{\theta}^{i+1})\\
    &+\left(\frac{1}{x_{i-1}}+x_{k+2}\right)(Q_{\theta}^{i+1}) +(x_{k+2}+(\delta x_{k+1})^2+\delta x_{k+1})(Q_{\theta}^{i+1})=\theta+1+0+\\
      &+1+1+0+\b=\theta+1+\b.
\end{align*}
Since $\theta+1+\b$  is equal to $0$ or $1$, then
\[\tr\left((f(x_{i+1})+u^2+u)(Q_{\theta}^{i+1})\right)=0,\]
and, as we have seen in Remark \ref{reductor}, this implies that  the place
$Q_{\theta}^{i+1}$ splits completely in $F_{i+2}/F_{i+1}$. Let us prove now \eqref{i2}. We have
\[x_{i+1}^2+x_{i+1}+1=\frac{(x_i+1)^2}{x_i^2+x_i+1},\]
so that from \eqref{i1} we see that
$\nu_{Q_{\theta}^{i+1}}(x_{i+1}+\theta)=4$. Since
$e(Q_{\ii}^{i+2}|Q_{\theta}^{i+1})=1$ we deduce
\[2\nu_{Q_{\ii}^{i+2}}(x_{i+2})=\nu_{Q_{\ii}^{i+2}}(x_{i+2}^2+x_{i+2})=\nu_{Q_{\theta}^{i+1}}\left(\frac{x_{i+1}}{x_{i+1}^2+x_{i+1}+1}\right)=-4,\]
and then $\nu_{Q_{\ii}^{i+2}}(x_{i+2})=-2$. Since $e(Q_{\ii}^{i+2}|Q_1^{i})=1$, from \eqref{i1} we also have that $\nu_{Q_{\ii}^{i+2}}(x_i+1)=2$. Clearly $\nu_{Q_{\ii}^{i+2}}(x_{i+1})=0$ and the first part of \eqref{i2} follows.  Let us see now
that $\nu_{Q_{\ii}^{i+2}}(\d^{\prime})=-1$. In fact,
\begin{align*}
{\delta^{\prime}}^2+\delta^{\prime}&=x_{i+2}^2+x_{i+2}+\left(\frac{x_{i+1}+1}{x_{i}+1}\right)^2+\frac{x_{i+1}+1}{x_{i}+1}\\
                                   &=f(x_{i+1})+\left(\frac{x_{i+1}+1}{x_{i}+1}\right)^2+\frac{x_{i+1}+1}{x_{i}+1}\\
                                   &=x_{i+1}+\frac{1}{x_{i}^2+x_{i}+1}+\frac{1}{x_{i}+1}, \\
\end{align*} by Lemma~\ref{techlem}.  Then \[\nu_{Q_{\ii}^{i+2}}({\delta^{\prime}}^2+\delta^{\prime})=\nu_{Q_{\ii}^{i+2}}\left(x_{i+1}+\frac{1}{x_{i}^2+x_{i}+1}+\frac{1}{x_{i}+1}\right)=-2,\]
so that $\nu_{Q_{\ii}^{i+2}}({\delta^{\prime}})=-1$ and we are done with item \eqref{i2}.
Finally \eqref{i3} follows by noticing that  the same argument given in the proof of \eqref{4.8.5} of Theorem \ref{leg4.8} applies in this case because we have that the ramification condition (R\ref{sc})  holds with $k=i$ and $\b=\t$ and  the element $\delta'$ is just $\delta$ with $k=i$.
\end{proof}

\begin{lem}\label{lemadeunosg}For $k\geq 0$ let us consider the subsequence $\{F_j\}_{j=k}^{k+5}$ of the tower $\mathcal{H}$ and the sequence of places:
\[Q^{k}_{1}\subset Q^{k+1}_{\b}\subset Q^{k+2}_{\ii}\subset Q^{k+3}_{1}\subset Q^{k+4}_{\theta}\subset Q^{k+5}_{\ii}. \]
Then the place $Q_{\theta}^{k+4}$ splits completely in $F_{k+5}/F_{k+4}$. Furthermore, we have that the following properties hold:
\begin{enumerate}[(i)]
\item\label{item4.6.1} $\nu_{Q_1^{k+3}}(x_{k+3}+1)=-\nu_{Q_1^{k+3}}({x_{k+2}})=2$, $\nu_{Q_1^{k+3}}((x_{k+3}+1)x_{k+2})=0$  and \\ $((x_{k+3}+1)x_{k+2})(Q_1^{k+3})=1$.
%\item The places $Q_{\theta}^{k+4}$ splits completely in $F_{k+4}/F_{k+5}$.
\item\label{item4.6.2} $\nu_{Q_{\ii}^{k+5}}(x_{k+3}+1)=-\nu_{Q_{\ii}^{k+5}}(x_{k+5})=2$ and  $\nu_{Q_{\ii}^{k+5}}(x_{k+4})=0$.
\item\label{item4.6.3} $\nu_{Q_{\ii}^{k+5}}(\d^{''})=-1$ where \[\d^{''}=x_{k+5}+\frac{x_{k+4}+1}{x_{k+3}+1}.\]
\item\label{item4.6.4} $(x_{k+5}+(x_{k+4} \d^{''})^2+x_{k+4}\d^{''})(Q_{\ii}^{k+5})=\theta.$
\end{enumerate}
\end{lem}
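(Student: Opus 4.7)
The strategy mirrors Lemma \ref{leg4.9}: construct an Artin--Schreier element of type $2$ for $Q_\theta^{k+4}$, invoke the remark after Definition \ref{astype} to conclude the complete splitting, and read off items (ii)--(iv) from direct valuation computations. The novelty here is that $Q_1^{k+3}$ lies directly over the pole $Q_\ii^{k+2}$ (there is no intermediate $Q_0^j$), so both $x_{k+2}$ and $1/(x_{k+3}+1)$ become double poles at $Q_\theta^{k+4}$ and must be paired to cancel.

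For (i), Proposition \ref{propoinicial} applied to the pole $Q_\ii^{k+2}$ (which splits completely in $F_{k+3}/F_{k+2}$) yields $\nu_{Q_1^{k+3}}(x_{k+3}+1)=-\nu_{Q_\ii^{k+2}}(x_{k+2})$; moreover $\nu_{Q_\ii^{k+2}}(x_{k+2})=-2$ follows from the Ramification Condition exactly as at the start of the proof of Lemma \ref{leg4.8}. The residue identity is obtained by rewriting the defining relation as $(x_{k+3}+1)x_{k+2}=x_{k+2}^2/[x_{k+3}(x_{k+2}^2+x_{k+2}+1)]$ and noting that $x_{k+3}(Q_1^{k+3})=1$ together with $x_{k+2}^2/(x_{k+2}^2+x_{k+2}+1)\to 1$ at $Q_1^{k+3}$.

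The candidate Artin--Schreier element of type $2$ is
\[u=\frac{x_{k+4}+1}{x_{k+3}+1}+x_{k+1}\delta,\qquad \delta=x_{k+2}+\frac{x_{k+1}+1}{x_k+1},\]
with $\delta$ as in Lemma \ref{leg4.8}. By Lemma \ref{techlem} applied with $x=x_{k+3}$, $y=x_{k+4}$,
\[f(x_{k+4})+u^2+u=x_{k+4}+\frac{1}{x_{k+3}^2+x_{k+3}+1}+\Bigl[\tfrac{1}{x_{k+3}+1}+x_{k+2}\Bigr]+\bigl[x_{k+2}+(x_{k+1}\delta)^2+x_{k+1}\delta\bigr].\]
The last bracket has nonnegative valuation and residue $\beta$ at $Q_\ii^{k+2}$ by Lemma \ref{leg4.8}\eqref{4.8.5}, hence the same residue at $Q_\theta^{k+4}$ since the intermediate steps are unramified with $\f_4$-rational residue fields. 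Inverting the defining relation gives $1/(x_{k+3}+1)=x_{k+3}(x_{k+2}+1+1/x_{k+2})$, so
\[\tfrac{1}{x_{k+3}+1}+x_{k+2}=x_{k+2}(x_{k+3}+1)+x_{k+3}+\tfrac{x_{k+3}}{x_{k+2}},\]
whose three summands have residues $1,1,0$ at $Q_\theta^{k+4}$ (the first by item (i), the rest from $x_{k+3}\to 1$ and $\nu(x_{k+2})=-2$). Therefore $f(x_{k+4})+u^2+u$ has nonnegative valuation at $Q_\theta^{k+4}$ with residue $\theta+1+\beta$. Since $\theta,\beta\in\{\alpha,\alpha+1\}$, this residue lies in $\f_2$, and $\tr_{\f_4/\f_2}(c)=c+c^2=0$ for $c\in\f_2$; thus $u$ is of type $2$ and the complete splitting follows from the remark after Definition \ref{astype}.

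With splitting established, (ii) is immediate: unramifiedness from $Q_1^{k+3}$ gives $\nu_{Q_\ii^{k+5}}(x_{k+3}+1)=2$, while $x_{k+4}^2+x_{k+4}+1=(x_{k+3}+1)^2/(x_{k+3}^2+x_{k+3}+1)$ combined with $x_{k+5}^2+x_{k+5}=f(x_{k+4})$ yields $\nu_{Q_\ii^{k+5}}(x_{k+5})=-2$. Item (iii) follows from Lemma \ref{techlem} applied with $(x,y,z)=(x_{k+3},x_{k+4},x_{k+5})$: $(\delta'')^2+\delta''=x_{k+4}+1/(x_{k+3}^2+x_{k+3}+1)+1/(x_{k+3}+1)$, which has valuation $-2$ at $Q_\ii^{k+5}$. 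Item (iv) is the identity derived in the proof of Lemma \ref{leg4.8}\eqref{4.8.5}, applied with $(x,y,z)=(x_{k+3},x_{k+4},x_{k+5})$; at $Q_\ii^{k+5}$ all resulting summands have positive valuation except $x_{k+4}$, whose residue is $\theta$. The main obstacle is constructing the correction $x_{k+1}\delta$ so that its pole at $Q_\ii^{k+2}$ pairs with the double pole of $1/(x_{k+3}+1)$ at $Q_\theta^{k+4}$, a cancellation mediated by item (i).
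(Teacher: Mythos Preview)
Your proof is correct and follows essentially the same approach as the paper: you construct the identical Artin--Schreier element $u=\frac{x_{k+4}+1}{x_{k+3}+1}+x_{k+1}\delta$, use Lemma~\ref{techlem} together with the rewriting $\frac{1}{x_{k+3}+1}=x_{k+3}(x_{k+2}+1+1/x_{k+2})$ to reduce the residue computation to item (i) and Lemma~\ref{leg4.8}\eqref{4.8.5}, and then derive (ii)--(iv) exactly as the paper does. The only difference is a cosmetic regrouping of the summands (you pair $1/(x_{k+3}+1)$ with $x_{k+2}$ directly, while the paper splits this into $x_{k+3}(1+1/x_{k+2})$ and $(x_{k+3}+1)x_{k+2}$), yielding the same residue $\theta+1+\beta\in\f_2$.
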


\begin{proof} Let us prove \eqref{item4.6.1}.  As we have seen at the beginning of the proof of Lemma \ref{leg4.8},  the ramification conditions (R\ref{tr}) or (R\ref{sc}) imply that  $Q_{\ii}^{k+2}$ is a pole of order $2$ of $x_{k+2}$ in $F_{k+2}$ and from Proposition \ref{propoinicial} we also have that  $Q_{\ii}^{k+2}$ splits completely in $F_{k+3}/F_{k+2}$.  Then
\[\nu_{Q_1^{k+3}}(x_{k+3}+1)=2=-\nu_{Q_1^{k+3}}({x_{k+2}}),\]
which implies that $\nu_{Q_1^{k+3}}((x_{k+3}+1)x_{k+2})=0$. Since
\[x_{k+3}((x_{k+3}+1)x_{k+2})=\frac{x_{k+2}^2}{x_{k+2}^2+x_{k+2}+1}=\frac{1}{1+\frac{1}{x_{k+2}}+\frac{1}{x_{k+2}^2}},\]
we see that
\begin{align*}
(x_{k+3}((x_{k+3}+1)x_{k+2}))(Q_1^{k+3}) &= x_{k+3}(Q_1^{k+3})((x_{k+3}+1)x_{k+2})(Q_1^{k+3})=\\ &= \frac{1}{1+\frac{1}{x_{k+2}}+\frac{1}{x_{k+2}^2}}(Q_{\ii}^{k+2})=1.
\end{align*}
Since $x_{k+3}(Q_1^{k+3})=1$ we conclude that $((x_{k+3}+1)x_{k+2})(Q_1^{k+3})=1$ and this finishes the proof of \ref{item4.6.1}. We show now that
\[u:=\frac{x_{k+4}+1}{x_{k+3}+1}+x_{k+1}\delta, \]
where $\delta$ is as in Lemma \ref{leg4.8}, is an Artin-Schreier element of type $2$ for $Q_\theta^{k+4}$. Indeed, using Lemma \ref{techlem} we have
\begin{align*}
f(x_{k+4})+u^2+u &= f(x_{k+4})+\left(\frac{x_{k+4}+1}{x_{k+3}+1}\right)^2+\frac{x_{k+4}+1}{x_{k+3}+1}+(x_{k+1}\delta)^2+x_{k+1}\delta \\
           &= x_{k+4}+\frac{1}{x_{k+3}^2+x_{k+3}+1}+\frac{1}{x_{k+3}+1}+(x_{k+1}\delta)^2+ x_{k+1}\delta \\
           &= x_{k+4}+\frac{1}{x_{k+3}^2+x_{k+3}+1}+x_{k+3}\left(x_{k+2}+1+\frac{1}{x_{k+2}}\right)+\\ &\quad +(x_{k+1}\delta)^2 +x_{k+1}\delta \\
           &= x_{k+4}+\frac{1}{x_{k+3}^2+x_{k+3}+1}+x_{k+3}x_{k+2}+\\ &\quad +x_{k+3}\left(1+\frac{1}{x_{k+2}}\right) +(x_{k+1}\delta)^2+x_{k+1}\delta \\
           &= x_{k+4}+\frac{1}{x_{k+3}^2+x_{k+3}+1}+x_{k+3}\left(1+\frac{1}{x_{k+2}}\right)+x_{k+3}x_{k+2}+\\
           &\qquad +x_{k+2}+x_{k+2}+(x_{k+1}\delta)^2+x_{k+1}\delta \\
           &= x_{k+4}+\frac{1}{x_{k+3}^2+x_{k+3}+1}+x_{k+3}\left(1+\frac{1}{x_{k+2}}\right)+  \\
           &\quad +(x_{k+3}+1)x_{k+2} + x_{k+2}+ (x_{k+1}\delta)^2+x_{k+1}\delta.
\end{align*}
Using \eqref{item4.6.1} and \eqref{deltasquare} it can be easily checked  that $Q_{\theta}^{k+4}$ is not a pole of any of the summands of the above last equality. Thus
\[ \nu_{Q_{\theta}^{k+4}}(f(x_{k+4})+u^2+u)\geq  0,\]
and the residual class $(f(x_{k+4})+u^2+u)(Q_{\theta}^{k+4})$ is
\begin{align*}
 & x_{k+4}(Q_{\theta}^{k+4})+\left(\frac{1}{x_{k+3}^2+x_{k+3}+1}\right)(Q_{\theta}^{k+4}) +\left(x_{k+3}(1+x_{k+2}^{-1})\right)(Q_{\theta}^{k+4})+\\ &+((x_{k+3}+1)x_{k+2})(Q_{\theta}^{k+4})+ (x_{k+2}+(x_{k+1}\delta)^2+x_{k+1}\delta)(Q_{\theta}^{k+4})=\theta+\\
& +1+1+1+\b=\theta+1+\b,
\end{align*}
where we have used \eqref{4.8.5} of Lemma \ref{leg4.8} in the last summand.
 Since $\theta+1+\b$ is equal to $0$ or $1$ then $\tr\left((f(x_{k+4})+u^2+u)(Q_{\theta}^{k+4})\right)=0$ and, as we have seen in Remark \ref{reductor}, this implies that   $Q_{\theta}^{k+4}$ splits completely in $F_{k+5}/F_{k+4}$.  Since
\[x_{k+4}^2+x_{k+4}+1=\frac{(x_{k+3}+1)^2}{x_{k+3}^2+x_{k+3}+1},\]
from \eqref{item4.6.1} we have that $\nu_{Q_{\theta}^{k+4}}(x_{k+4}+\theta)=4$ and since $e(Q_{\ii}^{k+5}|Q_{\theta}^{k+4})=1$ we obtain
\[2\nu_{Q_{\ii}^{k+5}}(x_{k+5})=\nu_{Q_{\ii}^{k+5}}(x_{k+5}^2+x_{k+5})=\nu_{Q_{\theta}^{k+4}}\left(\frac{x_{k+4}}{x_{k+4}^2+x_{k+4}+1}\right)=-4,\]
from which we can conclude that $\nu_{Q_{\ii}^{k+5}}(x_{k+5})=-2$. From \eqref{item4.6.1} we see at once that $\nu_{Q_{\ii}^{k+5}}(x_{k+3}+1)=2$ and thus we are done with the proof of \eqref{item4.6.2}. Let us see now that \eqref{item4.6.3} holds. In fact,
\begin{align*}
{\delta^{\prime}}^2+\delta^{\prime}& =x_{k+5}^2+x_{k+5}+\left(\frac{x_{k+4}+1}{x_{k+3}+1}\right)^2+\frac{x_{k+4}+1}{x_{k+3}+1}\\
&=f(x_{k+4})+\left(\frac{x_{k+4}+1}{x_{k+3}+1}\right)^2+\frac{x_{k+4}+1}{x_{k+3}+1}\\
&=x_{k+4}+\frac{1}{x_{k+3}^2+x_{k+3}+1}+\frac{1}{x_{k+3}+1},
\end{align*}
by Lemma~\ref{techlem}. Therefore $\nu_{Q_{\ii}^5}(\delta^{''})=-1$
because
\[\nu_{Q_{\ii}^{k+5}}({\delta^{''}}^2+\delta^{''})=\nu_{Q_{\ii}^{k+5}}\left(x_{k+4}+\frac{1}{x_{k+3}^2+x_{k+3}+1}+\frac{1}{x_{k+3}+1}\right)=-2.\]

 Finally \eqref{item4.6.4} follows directly from \eqref{4.8.5} of Lemma \ref{leg4.8}, because we have that the ramification condition (R\ref{sc}) holds with $k$ and $\b$ replaced by $k+3$ and $\t$ respectively.
\end{proof}

Now we show that the ramification condition (R\ref{tr}) holds for $k=0$.
\begin{propo}\label{propofinal}Let us consider  the subsequence $\{F_j\}_{j=0}^{2}$  of the tower $\mathcal{H}$ and  the  sequence of places
\[Q_1^0\subset  Q_{\b}^{1} \subset Q_{\infty}^{2}.\]
Then the place $Q_{\b}^{1}$ is totally ramified in $F_2/F_1$ and $\nu_{Q^2_{\infty}}(\delta)=-1$ where \[\delta=x_2+\frac{x_1+1}{x_0+1}.\]
Furthermore, we have that the following properties hold:
\begin{enumerate}[(i)]
\item\label{4.11.1} $\nu_{Q^2_{\infty}}(x_0+1)=-\nu_{Q^2_{\infty}}(x_2)=2$ and $\nu_{Q^2_{\infty}}(x_1)=0$.
\item\label{4.11.2} $(x_2+(x_1 \d)^2+x_1\d)(Q^2_\ii)=\b$ and $\nu_{Q^2_{\infty}}(x_2+(x_1 \d)^2+x_1 \d)=0$.

\end{enumerate}
\end{propo}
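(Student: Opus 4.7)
The plan is to exhibit an Artin--Schreier element of type $1$ for the place $Q_{\beta}^{1}$ and then read off everything about the unique extension $Q_{\infty}^{2}$ from Propositions \ref{propoinicial}--\ref{reductor} together with Lemma \ref{techlem}. First, I would apply Lemma \ref{techlem} with $x=x_{0}$ and $y=x_{1}$ to obtain
\[
f(x_{1})+\left(\frac{x_{1}+1}{x_{0}+1}\right)^{2}+\frac{x_{1}+1}{x_{0}+1}=x_{1}+\frac{1}{x_{0}^{2}+x_{0}+1}+\frac{1}{x_{0}+1}.
\]
Since $Q_{\beta}^{1}\cap F_{0}=Q_{1}^{0}$, the residual classes $x_{0}(Q_{\beta}^{1})=1$ and $x_{1}(Q_{\beta}^{1})=\beta$ give $\nu_{Q_{\beta}^{1}}(x_{1})=0$, $\nu_{Q_{\beta}^{1}}(x_{0}^{2}+x_{0}+1)=0$, and $\nu_{Q_{\beta}^{1}}(x_{0}+1)=e(Q_{\beta}^{1}|Q_{1}^{0})\cdot 1=1$ by Proposition \ref{propoinicial}. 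Hence the right-hand side has valuation $-1$ at $Q_{\beta}^{1}$, so $u=(x_{1}+1)/(x_{0}+1)$ is an Artin--Schreier element of type $1$ for $Q_{\beta}^{1}$, and Proposition \ref{reductor}(i) gives the total ramification of $Q_{\beta}^{1}$ in $F_{2}/F_{1}$ together with
\[
\nu_{Q_{\infty}^{2}}(x_{2})=-\nu_{Q_{\beta}^{1}}(x_{1}+\beta)=-2\nu_{Q_{1}^{0}}(x_{0}+1)=-2,
\]
where the middle equality uses Proposition \ref{propoinicial} again.

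Once this is in hand, item \eqref{4.11.1} follows immediately: $\nu_{Q_{\infty}^{2}}(x_{0}+1)=e(Q_{\infty}^{2}|Q_{\beta}^{1})\,\nu_{Q_{\beta}^{1}}(x_{0}+1)=2\cdot 1=2$, and $\nu_{Q_{\infty}^{2}}(x_{1})=0$ because $x_{1}(Q_{\infty}^{2})=\beta\neq 0$. For the statement about $\delta$, I would compute directly
\[
\delta^{2}+\delta=x_{2}^{2}+x_{2}+\left(\frac{x_{1}+1}{x_{0}+1}\right)^{2}+\frac{x_{1}+1}{x_{0}+1}=f(x_{1})+\left(\frac{x_{1}+1}{x_{0}+1}\right)^{2}+\frac{x_{1}+1}{x_{0}+1},
\]
which by the identity above equals $x_{1}+1/(x_{0}^{2}+x_{0}+1)+1/(x_{0}+1)$. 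Evaluating valuations at $Q_{\infty}^{2}$ via \eqref{4.11.1} gives $\nu_{Q_{\infty}^{2}}(\delta^{2}+\delta)=-2$, hence $\nu_{Q_{\infty}^{2}}(\delta)=-1$.

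For \eqref{4.11.2}, the key point is that the algebraic manipulation in the proof of item \eqref{4.8.5} of Lemma \ref{leg4.8} did not make use of the ramification condition; it relied only on the defining equations and Lemma \ref{techlem}. Specializing that identity to $k=0$ yields
\[
x_{2}+(x_{1}\delta)^{2}+x_{1}\delta=x_{2}\,\frac{(x_{0}+1)^{2}}{x_{0}^{2}+x_{0}+1}+x_{1}+\frac{(x_{0}+1)^{2}}{(x_{0}^{2}+x_{0}+1)^{2}}+\frac{x_{0}+1}{x_{0}^{2}+x_{0}+1}.
\]
From \eqref{4.11.1} each of the three non-$x_{1}$ summands has strictly positive valuation at $Q_{\infty}^{2}$, while $x_{1}$ has valuation zero there and reduces to $\beta$. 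Therefore the left-hand side has valuation $0$ at $Q_{\infty}^{2}$ with residual class $\beta$, which is \eqref{4.11.2}.

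The only point requiring real care is the first step, namely the check that the expression obtained from Lemma \ref{techlem} actually has valuation exactly $-1$ (and not worse, or cancelled by the constant-valuation summands) at $Q_{\beta}^{1}$; this is what makes $u$ an Artin--Schreier element of type $1$ and hence triggers Proposition \ref{reductor}(i). Once that valuation bookkeeping is done correctly, everything else in the proposition is a direct consequence of the reduction formulas and the already-established identity from \eqref{4.8.5}, so no further ad hoc computation is needed.
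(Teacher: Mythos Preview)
Your proof is correct and follows essentially the same route as the paper's: both use Lemma~\ref{techlem} to produce the Artin--Schreier element $u=(x_1+1)/(x_0+1)$, deduce total ramification of $Q_\beta^1$ and $\nu_{Q_\infty^2}(\delta)=-1$ from the resulting valuation $-1$ at $Q_\beta^1$, and then obtain item~\eqref{4.11.2} from the purely algebraic identity established in the proof of Lemma~\ref{leg4.8}\eqref{4.8.5}. The only cosmetic difference is that the paper derives $e(Q_\infty^2|Q_\beta^1)=2$ and $\nu_{Q_\infty^2}(\delta)=-1$ simultaneously from $\nu_{Q_\infty^2}(\delta^2+\delta)=-e$, whereas you first invoke Proposition~\ref{reductor}(i) and then compute $\nu_{Q_\infty^2}(\delta)$; both arguments are equivalent.
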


\begin{proof}
It is clear that $\nu_{Q^1_\b}(x_0+1)=1$.  If we write $u=\frac{x_1+1}{x_0+1}$ then, by Lemma~\ref{techlem}, we  have \[f(x_1)+u^2+u=x_1+\frac{1}{x_0^2+x_0+1}+\frac{1}{x_0+1}\cdot\]
 On the other hand, since $\delta=x_2+u$, we see that
\[\d^2+\d=x_2^2+x_2+u^2+u=f(x_1)+u^2+u=x_1+\frac{1}{x_0^2+x_0+1}+\frac{1}{x_0+1},\]
and
\[\nu_{Q^2_\ii}(\d^2+\d)=e(Q^2_{\ii}|Q^1_{\b})\nu_{Q^1_\b}\left(x_1+\frac{1}{x_0^2+x_0+1}+\frac{1}{x_0+1}\right)=-e(Q^2_{\ii}|Q^1_{\b}),\]
so that $\nu_{Q^2_\ii}(\d)<0$ and this implies that
$\nu_{Q^2_\ii}(\d)=-1$ and $e(Q^2_{\ii}|Q^1_{\b})=2$. Thus \eqref{4.11.1} is straightforward and since $\nu_{Q^0_1}(x_0+1)=1$ and $Q^1_{\b}$ is totally ramified in $F_2/F_1$, the ramification condition (R\ref{tr}) holds with $k=0$ so that \eqref{4.11.2} follows by taking $k=0$ in \eqref{4.8.5} of Lemma \ref{leg4.8}.
\end{proof}
 We are finally in a position to state and prove the main results  for the even case.

\begin{teo}\label{goodeven}  Let $F_0=\f_4(x_0)\subset F_1\subset F_2$ be the first three steps of the tower $\mathcal{H}$. Any pole  of $x_2$ in $F_2$ lying over the zero of $x_0+1$ in $F_0$ splits completely in $\mathcal{H}$.
\end{teo}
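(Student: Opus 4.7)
By Proposition~\ref{propoinicial} applied to $F_0=\f_4(x_0)\subset F_1$, the zero $Q^0_1$ of $x_0+1$ splits completely in $F_1/F_0$ into two rational places $Q^1_\alpha$ and $Q^1_{\alpha+1}$. By Proposition~\ref{propofinal}, each $Q^1_\beta$ is totally ramified in $F_2/F_1$, producing a unique rational pole $Q^2_\infty$ of $x_2$. It therefore suffices to fix one such pole $P=Q^2_\infty$ and prove that $P$ splits completely in every $F_n/F_2$ with $n\geq 2$.

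The argument is by strong induction on $n$, maintaining the following invariants about the set of places of $F_n$ lying above $P$: there are exactly $2^{n-2}$ such places, all rational, each classified by its \emph{type}, the residue class of $x_n$ modulo it, which is an element of $\{0,1,\alpha,\alpha+1,\infty\}$; the type transitions from level $n$ to level $n+1$ follow $\infty\mapsto\{0,1\}$, $0\mapsto\{0,1\}$, $1\mapsto\{\alpha,\alpha+1\}$, and $\theta\mapsto\{\infty,\infty\}$ for $\theta\in\{\alpha,\alpha+1\}$, each arrow denoting complete splitting into two rational places of the stated types; and at every subchain of the form $Q^k_1\subset Q^{k+1}_\beta\subset Q^{k+2}_\infty$ appearing in the tree above $P$, the Ramification Condition~\ref{ramifcond} is satisfied.

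The inductive step proceeds as follows. Places of types $\infty$, $0$, or $1$ in $F_n$ are handled directly by Proposition~\ref{propoinicial}, which yields the stated complete splitting. For a place $Q^n_\theta$ of type $\theta\in\{\alpha,\alpha+1\}$, we trace its predecessor chain: $Q^n_\theta$ is preceded by $Q^{n-1}_1$, which is preceded by a (possibly empty) chain of type-$0$ places ending at an $\infty$-type ancestor $Q^{n-j}_\infty$ for some $j\geq 2$ (such a $Q_\infty$ ancestor must exist, since types $0$ and $1$ in our tree can arise only from types $0$ or $\infty$, and the tree starts from $Q^2_\infty$). When $j=2$, Lemma~\ref{lemadeunosg} applies with $k=n-4$ and delivers complete splitting of $Q^n_\theta$ into two poles of $x_{n+1}$; when $j\geq 3$, Lemma~\ref{leg4.9} applies with $k=n-j-2$ and $i=n-1$ to the same effect.

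Invoking either lemma requires the Ramification Condition at index $k=n-j-2$ to hold for the subchain $Q^k_1\subset Q^{k+1}_\beta\subset Q^{k+2}_\infty$ at the base of the traced chain. If $k=0$, condition~(R1) is furnished by Proposition~\ref{propofinal}. If $k\geq 1$, the ancestor $Q^{k+2}_\infty$ arose as one of two poles produced by a previous splitting of some $\theta$-type place $Q^{k+1}_\beta$; the application of Lemma~\ref{leg4.9} or~\ref{lemadeunosg} that delivered that splitting also established, via its part~(i), the identity $\nu_{Q^k_1}(x_k+1)=2$, and together with the complete splitting of $Q^{k+1}_\beta$ this gives condition~(R2). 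Thus the Ramification Condition invariant propagates and the induction closes. The hard analysis is already packaged in the technical lemmas; the main obstacle here is purely combinatorial, namely matching each type-$\theta$ place to the correct lemma and carefully tracking the preservation of the Ramification Condition up through the tree.
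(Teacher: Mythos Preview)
Your proposal is correct and follows essentially the same approach as the paper: both arguments use Proposition~\ref{propoinicial} to handle places of types $0,1,\infty$ automatically, trace each type-$\theta$ place back along its $0$-chain to the nearest $\infty$-ancestor, invoke Lemma~\ref{lemadeunosg} (no intermediate zeros) or Lemma~\ref{leg4.9} (at least one intermediate zero) to obtain the splitting, and propagate the Ramification Condition inductively---(R1) at $k=0$ via Proposition~\ref{propofinal}, (R2) at $k\geq 1$ via part~(i) of the previously applied lemma. The only difference is organizational: the paper splits into cases according to whether the level-$3$ place is $Q^3_1$ or $Q^3_0$ and then analyzes the two chain patterns \eqref{sequence1-I} and \eqref{sequence1-II} separately, whereas you run a single unified strong induction with explicit type-tracking; the mathematical content is identical.
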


\begin{proof} Let $Q^2_{\ii}$ be a pole of $x_2$ in $F_2$ lying over the zero $Q^0_1$ of $x_0+1$ in $F_0$. Notice that Proposition \ref{propoinicial} tell us that we are in the situation of Proposition \ref{propofinal}, i.e.
\[Q_1^0\subset  Q_{\b}^{1} \subset Q_{\infty}^{2},\]
 where we recall that $Q_{\b}^{1}$ denotes a zero of $x_{1}+\b$ in $F_{1}$ with $\beta \in \{\alpha, \alpha+1\}$. In particular, the ramification condition (R\ref{tr}) holds for $k=0$. Also, from Proposition \ref{propoinicial} we see that $Q^2_{\ii}$ splits into two places of $F_3$, namely:
 \begin{enumerate}[(a)]
    \item a zero $Q^3_1$ of $x_3+1$ in $F_3$ and
    \item a zero $Q^3_0$ of $x_3$ in $F_3$.
 \end{enumerate}
   In case (a) the place $Q^3_1$ splits completely into the zeros $Q^4_{\gamma}$ of $x_4+\gamma $ for $\gamma \in \{\alpha, \alpha+1\}$ and Lemma \ref{lemadeunosg} with $k=0$, tell us that $Q^4_{\gamma}$ splits completely in $F_5/F_4$, for $\gamma \in \{\alpha, \alpha+1\}$, into two poles $Q^5_{\infty}$ of $x_5$ in $F_5$. Furthermore from \eqref{4.11.1} of Proposition \ref{propofinal} we have that
 \[\nu_{Q^3_1}(x_3+1)=\nu_{Q^3_1}(x^2_3+x_3)=\nu_{Q^2_{\ii}}\left(\frac{x_2^2}{x_2^2+x_2+1}\right)=2,\]
 so that the ramification condition (R\ref{sc}) holds for $k=3$. Suppose now that we have the sequence of places
 \[Q^i_1\subset Q^{i+1}_{\gamma}\subset Q^{i+2}_{\infty}\subset Q^{i+3}_1\subset Q^{i+4}_{\gamma}\subset Q^{i+5}_{\infty}, \]
  lying over $Q^3_1$ for some $i\geq 3$. If the ramification condition (R\ref{sc}) holds for $k=i$, then by Lemma \ref{lemadeunosg} we have that $Q^{i+4}_{\gamma}$ splits completely in $F_{i+5}/F_{i+4}$ and $\nu_{Q^{i+3}_1}(x_{i+3}+1)=2$, so that the ramification condition (R\ref{sc}) holds for $k=i+3$. Since the ramification condition (R\ref{sc}) holds for $k=3$, we have by an inductive argument, that the ramification condition (R\ref{sc}) holds for $k=i$ for any $i\geq 3$ in the sequence of places
\begin{equation}\label{sequence1-I}
Q^3_1\subset Q^{4}_{\gamma}\subset Q^{5}_{\infty}\subset Q^6_1\subset Q^{7}_{\gamma}\subset Q^{8}_{\infty}\subset \cdots \subset Q^i_1\subset Q^{i+1}_{\gamma}\subset Q^{i+2}_{\infty},
\end{equation}
where the pattern $Q^j_1\subset Q^{j+1}_{\gamma}\subset Q^{j+2}_{\infty}$ goes one after another, i.e. for  $j=3,6,9,\ldots,i$. Now suppose that we have the sequence of places
\begin{equation}\label{sequence1-II}
Q^j_1\subset Q^{j+1}_{\gamma}\subset Q^{j+2}_{\infty}\subset Q^{j+3}_0\subset Q^{j+4}_0\subset \cdots\subset Q^{i-1}_0\subset Q^{i}_1\subset Q^{i+1}_{\gamma}\subset Q^{i+2}_{\infty},
\end{equation}
lying over $Q^3_1$ and that the ramification condition (R\ref{sc}) holds for $k=j$. Then Lemma \ref{leg4.9} tell us that $Q^{i+1}_{\gamma}$ splits completely in $F_{i+2}/F_{i+1}$ and $\nu_{Q^{i}_{1}}(x_i+1)=2$ so that the ramification condition \ref{sc} holds for $k=i$. Now from Proposition \ref{propoinicial} we see that if we go down in the tower $\mathcal{H}$ from the place $Q^j_1$ in \eqref{sequence1-II}, we will find sequences of the form \eqref{sequence1-I} or \eqref{sequence1-II} one after another until we reach  a chain of places of the form \eqref{sequence1-I} for some index $k\geq 3$, and we have seen that the ramification condition (R\ref{sc}) holds for such an index $k$. Therefore, an inductive argument shows that every time we have a sequence of places of the form
\[Q^j_1\subset Q^{j+1}_{\gamma}\subset Q^{j+2}_{\infty},\]
for  $j\geq 3$ lying over $Q^3_1$, the ramification condition (R\ref{sc}) holds for $k=j$. In particular we have that $Q^i_{\gamma}$ splits completely in $F_{i+1}/F_i$, for $\gamma \in \{\alpha, \alpha+1\}$, into two poles $Q^{i+1}_{\infty}$ of $x_{i+1}$ in $F_{i+1}$ for any $i\geq 3$ whenever $Q^i_{\gamma}$ lies over $Q^3_1$. Since all the other type of places lying over $Q^3_1$ also split completely in each step of the tower,  we conclude that $Q^3_1$ splits completely in $\mathcal{H}$.

Now we consider case (b).  From Proposition \ref{propoinicial} we see that $Q^3_0$ splits completely into a zero $Q^4_0$ of $x_4$ and a zero $Q^4_1$ of $x_4+1$. The same proposition tell us that $Q^4_0$ splits completely into a zero $Q^5_0$ of $x_5$ and a zero $Q^5_1$ of $x_5+1$ and so on. In each of these cases we have the sequence of places
\[Q^3_0\subset Q^4_1\subset Q^{5}_{\gamma}\subset Q^{6}_{\infty},\]
and the sequence
\[Q^3_0\subset Q^4_0\subset \cdots\subset Q^{i-1}_0\subset Q^i_1\subset Q^{i+1}_{\gamma}\subset Q^{i+2}_{\infty},\]
for $i> 4$. In the first case we are in the conditions of Lemma \ref{leg4.9} with $k=0$ and $i=4$. Since the ramification condition (R\ref{tr}) holds for $k=0$ we have from Lemma \ref{leg4.9} that the place $Q^{5}_{\gamma}$ splits completely in $F_6/F_5$ and $\nu_{Q^4_1}(x_4+1)=2$ so that ramification condition (R\ref{sc}) holds for $i=4$. In the second case  we are in the conditions of Lemma \ref{leg4.9} with $k=0$ and $i>4$. Since the ramification condition (R\ref{tr}) holds for $k=0$ we have from Lemma \ref{leg4.9} that $Q^{i+1}_{\gamma}$ splits completely in $F_{i+2}/F_{i+1}$ and $\nu_{Q^i_1}(x_i+1)=2$ so that the ramification condition (R\ref{sc}) also holds for $i>4$. Now  every time we have a sequence of places of the form
\[Q^i_1\subset Q^{i+1}_{\gamma}\subset Q^{i+2}_{\infty},\]
for  $i\geq 4$ lying over $Q^3_0$, from Proposition \ref{propoinicial} we see that if we go down in the tower, we will get a sequence of places of the form
\[Q^j_1\subset Q^{j+1}_{\gamma}\subset Q^{j+2}_{\infty}\subset\cdots\subset Q^{i}_1\subset Q^{i+1}_{\gamma}\subset Q^{i+2}_{\infty},\]
where the ramification condition (R\ref{sc}) holds for $k=j$ and having sequences of the form \eqref{sequence1-I} or \eqref{sequence1-II} in between. The same inductive argument we have already used in the case (a) shows in this case that the ramification condition (R\ref{sc}) holds for $k=i$.  In particular we have that $Q^i_{\gamma}$ splits completely in $F_{i+1}/F_i$, for $\gamma \in \{\alpha, \alpha+1\}$, into two poles $Q^{i+1}_{\infty}$ of $x_{i+1}$ in $F_{i+1}$ for any $i\geq 4$ whenever $Q^i_{\gamma}$ lies over $Q^3_0$. Since any other kind of places lying over $Q^3_0$ also split completely in each step of the tower,  we conclude that $Q^3_0$ splits completely in $\mathcal{H}$. Finally, since $Q^2_{\ii}$ splits completely into the places $Q^3_1$ and $Q^3_0$ in $F_3/F_2$, we have that $Q^2_{\ii}$ splits completely in the tower $\mathcal{H}$.
\end{proof}

\begin{teo}\label{goodf4}
The tower $\mathcal{H}$ is asymptotically good over $\f_{4}$ and with limit
\[\lambda(\mathcal{H})\geq  \frac{1}{8}.\]
\end{teo}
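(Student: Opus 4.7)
The plan is to combine the genus bound from Theorem \ref{finitegenus} with the complete splitting statement from Theorem \ref{goodeven} to produce a lower bound for $\nu(\mathcal{H})$, and then divide to obtain the bound for $\lambda(\mathcal{H})$.

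First I would locate explicitly, at the level of $F_2$, the rational places that split completely from $F_2$ onwards. By Proposition \ref{propoinicial}, the zero $Q_1^0$ of $x_0+1$ in $F_0$ splits in $F_1/F_0$ into the two rational places $Q_\alpha^1$ and $Q_{\alpha+1}^1$. Then by Proposition \ref{propofinal}, each $Q_\beta^1$ (for $\beta\in\{\alpha,\alpha+1\}$) is totally ramified in $F_2/F_1$, with the unique place above being a (rational) pole $Q_\infty^2$ of $x_2$. Thus exactly two rational places of $F_2$ lie over $Q_1^0$, and Theorem \ref{goodeven} asserts that both of them split completely in the tower $\mathcal{H}$.

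Since these two places of $F_2$ split completely in every extension $F_i/F_2$ with $i\geq 2$, each one contributes exactly $[F_i:F_2]=2^{i-2}$ rational places of $F_i$. Counting both, we obtain
\[
N(F_i)\;\geq\;2\cdot 2^{i-2}\;=\;2^{i-1}\quad\text{for all }i\geq 2.
\]
Dividing by $[F_i:F_0]=2^i$ and letting $i\to\infty$ gives
\[
\nu(\mathcal{H})\;=\;\lim_{i\to\infty}\frac{N(F_i)}{[F_i:F_0]}\;\geq\;\frac{1}{2}.
\]
In particular $\nu(\mathcal{H})>0$, so $\mathcal{H}$ has positive splitting rate over $\f_4$.

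For the genus side, Theorem \ref{finitegenus} already gives $\gamma(\mathcal{H})\leq 4$ (and in particular $\gamma(\mathcal{H})<\infty$). Combining both estimates,
\[
\lambda(\mathcal{H})\;=\;\frac{\nu(\mathcal{H})}{\gamma(\mathcal{H})}\;\geq\;\frac{1/2}{4}\;=\;\frac{1}{8},
\]
which shows that $\mathcal{H}$ is asymptotically good over $\f_4$ with the stated bound. There is no real obstacle at this stage: all the hard work has been absorbed into Theorem \ref{finitegenus} (the genus bound via the weakly ramified structure) and Theorem \ref{goodeven} (the complete splitting of the two poles of $x_2$ over $Q_1^0$); the present statement is a bookkeeping consequence obtained by multiplying the number of completely splitting places of $F_2$ by $[F_i:F_2]$ and dividing by $[F_i:F_0]$.
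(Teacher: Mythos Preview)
Your argument is correct and follows essentially the same route as the paper's own proof: locate the two rational poles of $x_2$ in $F_2$ above $Q_1^0$ (via Propositions \ref{propoinicial} and \ref{propofinal}), invoke Theorem \ref{goodeven} for their complete splitting to get $N(F_i)\geq 2^{i-1}$ and hence $\nu(\mathcal{H})\geq 1/2$, and combine with $\gamma(\mathcal{H})\leq 4$ from Theorem \ref{finitegenus}. If anything, your attribution is slightly more precise than the paper's, which cites only Proposition \ref{propoinicial} for both the splitting in $F_1/F_0$ and the total ramification in $F_2/F_1$.
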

\begin{proof}
From Theorem \ref{finitegenus}  we know that the genus $\gamma(\mathcal{H})$ of $\mathcal{H}$ satisfies the inequality
\[\gamma(\mathcal{H})\leq 4. \]
On the other hand,  from Proposition \ref{propoinicial} we see  that $Q^0_1$ splits into the two places $Q^1_{\alpha}$ and $Q^1_{\alpha+1}$ of $F_1$ and each of them is totally ramified in $F_2/F_1$ into a pole of $x_2$ in $F_2$. Therefore both poles of $x_2$ in $F_2$ are rational places and  we have that these two places  split completely in $\mathcal{H}$ by Theorem \ref{goodeven} so that $N(F_i)\geq 2^{i-1}$. Thus
\[\nu(\mathcal{H})\geq \frac{1}{2}, \]
and the conclusion readily follows.
\end{proof}

\begin{cor} The tower $\mathcal{H}$ is asymptotically good over $\f_{2^s}$ for any $s$ even with limit 
\[\lambda(\mathcal{H})\geq  \frac{1}{8}.\]
\end{cor}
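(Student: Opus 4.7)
The plan is to reduce to the case $s=2$ already handled in Theorem \ref{goodf4}, exploiting that $\f_4\subseteq \f_{2^s}$ whenever $s$ is even and that constant field extension preserves both genus and complete splitting of rational places.

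First, I would observe that Theorem \ref{finitegenus} was proved for every positive integer $s$, so the genus estimate $\gamma(\mathcal{H})\leq 4$ is already available over $\f_{2^s}$ without any additional work. All that remains is to produce a positive lower bound for $\nu(\mathcal{H})$ over $\f_{2^s}$.

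Second, I would transfer the splitting produced in Theorem \ref{goodeven} from $\f_4$ to $\f_{2^s}$. Write $\mathcal{H}_{\f_4}=\{F_i\}_{i\geq 0}$ for the tower over $\f_4$ and $\mathcal{H}=\{E_i\}_{i\geq 0}$ for the tower over $\f_{2^s}$; since $s$ is even, $E_i=F_i\cdot\f_{2^s}$, so $\mathcal{H}$ is exactly the constant field extension of $\mathcal{H}_{\f_4}$ from $\f_4$ to $\f_{2^s}$. I would apply Proposition \ref{extensionporconstantes} to the shifted tower $\{F_{i+2}\}_{i\geq 0}$ over $F_2$: by Theorem \ref{goodeven}, each of the two (rational, by the argument in the proof of Theorem \ref{goodf4}) poles of $x_2$ in $F_2$ lying over $Q_1^0$ belongs to the splitting locus of this shifted tower. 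Rational places lift uniquely to rational places under $\f_4\subseteq \f_{2^s}$, so by Proposition \ref{extensionporconstantes} each of these two places remains in the splitting locus of the constant field extension $\{E_{i+2}\}_{i\geq 0}$, producing $2^{i-2}$ rational places of $E_i$ lying over it for every $i\geq 2$.

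Summing the two contributions yields $N(E_i)\geq 2\cdot 2^{i-2}=2^{i-1}$ for every $i\geq 2$, hence
\[
\nu(\mathcal{H})=\lim_{i\to\infty}\frac{N(E_i)}{[E_i:E_0]}\geq \lim_{i\to\infty}\frac{2^{i-1}}{2^i}=\frac{1}{2},
\]
and combining with $\gamma(\mathcal{H})\leq 4$ gives $\lambda(\mathcal{H})=\nu(\mathcal{H})/\gamma(\mathcal{H})\geq 1/8$. There is no genuine obstacle here: the only technical point to be careful about is that Proposition \ref{extensionporconstantes} is formulated in terms of places of $F_0$, and so must be applied to the shifted subtower starting at $F_2$ rather than at $F_0$ (the places of $F_0$ themselves do not split completely in $\mathcal{H}_{\f_4}$, as is visible from Lemma \ref{lema3.1}).
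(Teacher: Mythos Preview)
Your proposal is correct and follows essentially the same route as the paper: reduce to the $\f_4$ case via constant field extension, using that genus is unchanged and complete splitting is preserved. The paper's own proof is terser---it simply asserts that the splitting rate is non-decreasing under constant field extensions and invokes Theorem~\ref{goodf4}---whereas you are more explicit in applying Proposition~\ref{extensionporconstantes} to the shifted subtower based at $F_2$ (where the completely splitting rational places actually live); this extra care is reasonable but does not constitute a genuinely different argument.
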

\begin{proof}
The genus of $\mathcal{H}$ is invariant under constant field extensions and Proposition \ref{extensionporconstantes} tell us that the splitting rate is non decreasing under constant field extensions. The conclusion now follows immediately from Theorem \ref{goodf4}.
\end{proof}

\subsection*{Acknowledgements}
Partially supported by CONICET and Proyecto CA\-ID 2011, Nro. 501 201101 00308LI - UNL

\end{document}